\tikzstyle{block}=[draw opacity=0.7,line width=1.4cm]
\newtheorem{theorem}{Theorem}
\newtheorem{definition}{Definition}
\newtheorem{example}{Example}
\newtheorem{remark}{Remark}[section]
\title{Recent developments in the fixed point theory of enriched contractive mappings. A survey}
\author{Vasile BERINDE$^{1,2}$}
\author{M\u ad\u alina P\u ACURAR$^{3}$}
\begin{document}
\maketitle \pagestyle{myheadings} \markboth{Vasile Berinde and M\u ad\u alina P\u ACURAR} {Enriched Kannan type...}
\begin{abstract}
The aim of this note is threefold: first, to present a few relevant facts about the way in which the technique of enriching contractive mappings was introduced; secondly, to expose the main contributions in the area of enriched mappings established by the authors and their collaborators by using this technique; and third, to survey some related developments in the very recent literature which were authored by other researchers.

\end{abstract}

\section{Introduction}

The concept of {\em enriched} nonexpansive mapping has been introduced in Berinde \cite{Ber19} in the case of a real Hilbert space and then was extended to the more general case of a Banach space in Berinde \cite{Ber20}, see also Berinde \cite{Ber18} where the technique of {\em enriching contractive type mappings} has been applied to strictly pseudocontractive operators. 

Soon after that, the authors and their collaborators applied successfully the same technique for some other classes of contractive type mappings in Hilbert spaces, Banach spaces or convex metric spaces, see Berinde \cite{Ber22}, \cite{Ber22b}, \cite{Ber23}, Berinde and P\u acurar \cite{BPac20}, \cite{BPac21}, \cite{BPac21a}, \cite{BPac21b}, \cite{BPac21c}, \cite{BPac21d}, \cite{BPac22}, Berinde et al. \cite{Ber22a}, Abbas et al. \cite{Abb21}, \cite{Abb21a}, Salisu et al. \cite{Sali23a},... 

Many other authors were attracted to work in the same area and therefore some interesting developments on enriched mappings were obtained, most of them included in the list of References.

The impressive interest for the use of the technique of enriching contractive type mappings suggested to us to undertake the task of offering a comprehensive exposure  to date on the subject. 
So, our main aim in this paper is threefold: 
\begin{enumerate}
\item to present a few relevant facts about the way in which the technique of enriching contractive mappings was (re-)discovered;
\item to expose the main contributions in the area of enriched mappings established by the authors and their collaborators  by using this technique;
\item to survey some related developments in this context which were authored by other researchers.
\end{enumerate}
  
 
 The paper is organized as follows: in Section \ref{s2} we give a brief account on how the technique of enriching contractive type mappings has been (re)-discovered 
and   present the facts about the origins of the technique of enriching contractive type mappings. 

Section \ref{s3}  is devoted to the exposition of some classes of enriched mappings, Section \ref{s4} exposes the classes of enriched nonexpansive mappings in Hilbert and Banach spaces, Section \ref{s5}  gives an account on the concepts of unsaturated and saturated classes of contractive mappings, Section \ref{s6}  surveys the brand new results on enriched contractions in quasi-Banach spaces, while Section \ref{s7}  deals with other developments in the area of enriched contractive type mappings by other authors.
 
 \section{The technique of enriching contractive type mappings}\label{s2}
 
 We start by presenting how the technique of enriching contractive type mappings was discovered (in fact, re-discovered, see explanations following later in this section). 
 
 For a rather long period of time, in connection with the study of various fixed point iterative schemes, partially surveyed in the monograph Berinde \cite{Ber07}, we were thinking about finding a way to compare the very many classes of nonexpansive type mappings existing in literature, as such a complete comparison did not exist.
 
 Basically, at the beginning, we were trying to compare, by appropriate examples, the following four important classes of nonexpansive type mappings:
 \begin{itemize}
\item nonexpansive mappings
\item quasi  nonexpansive mappings
\item strictly pseudocontractive mappings
\item demicontractive mappings,
\end{itemize}
 as at that time we were interested to deepen our knowledge on the great generality and value of demicontractive mappings. This  task  has been completed after a while and was very recently published in the paper Berinde \cite{Ber24}.
 
 On the way of performing such a comparison, we discovered by chance a method of deriving new constructive fixed point theorems, which we have called, based on the arguments presented above, as the technique of {\em enriching contractive type mappings}. 
 
 The starting point came from some known facts in the metrical fixed point theory. To present them, let $(X, \|\cdot\|)$ be a real normed space, $C\subset X$ a closed and convex set and $T:C\rightarrow C$ a self mapping. Denote by
$$
Fix\,(T)=\{x\in C: Tx=x\},
$$
the set of fixed points of  $T$. For $\lambda\in (0,1)$, let us also denote
$$
T_\lambda:=(1-\lambda) I+\lambda T.
$$
$T_\lambda$ is usually named as the {\em averaged} perturbation of $T$.  It is easy to see that
\begin{equation}\label{b1}
Fix\,(T)=Fix(T_\lambda)
\end{equation}
for all $\lambda\in (0,1)$.

A mapping $T$ is said to be
 {\it nonexpansive} if 
\begin{equation}\label{ne}
\|Tx-Ty\|\leq \|x-y\|,\,\textnormal{ for all } x,y\in C.
\end{equation}

We also recall that a mapping  $T:C\rightarrow C$ is called \textit{asymptotically regular} (on $C$)
if, for any $x\in C$, 
$$
\|T^{n+1}x-T^n x\|\rightarrow 0  \textnormal{ as } n\rightarrow \infty.
$$ 
It is known that if $T$ is nonexpansive, then  in general $T$ is not asymptotically regular but its averaged perturbation, $T_{\lambda}$, is asymptotically regular, a result that apparently was first established by Krasnoselskii \cite{Kra55}, in uniformly convex Banach spaces, and then used and developed by Browder and Petryshyn \cite{BroP66}, \cite{BroP67} in Hilbert spaces. 

This property is extremely important as it enables us to compute the fixed points of a nonexpansive mapping $T$ by means of its  averaged perturbation $T_\lambda$, which,  from the point of view of the convergence of the iterations, has  {\bf richer} properties than $T$. 

After contemplating for a long time the above enriching property of nonexpansive mappings with respect to asymptotical regularity, by using instead of $T$ its averaged perturbation $T_\lambda$, we were naturally conducted to formulate  the following
\medskip

{\bf Open problem}:   If one uses $T_\lambda =(1-\lambda) I+\lambda T$ instead of $T$ in a certain contraction conditions from metrical fixed point theory, do we obtain a richer classes of mappings ?
\medskip

Fortunately, the answer was in the affirmative: indeed, by using $T_\lambda =(1-\lambda) I+\lambda T$ instead of $T$ in some contraction conditions from metrical fixed point theory, we obtained richer classes of mappings, see the results surveyed in Sections \ref{s3}-\ref{s6}.

We first searched an answer for the above question in the case of nonexpansive mappings, by considering inequality \eqref{ne} with $T_\lambda$ instead of $T$, that is, by introducing 
\begin{equation}\label{b2}
\|T_\lambda x-T_\lambda y\|\leq \|x-y\|,\,\textnormal{ for all } x,y\in C,
\end{equation}
and thus we identified  the class of {\em enriched nonexpansive mappings} in Hilbert spaces (in Berinde \cite{Ber19}) and called such  a mapping $T$ as being  {\em enriched nonexpansive}. Its definition will be given later in Section \ref{s3}.
 
Of course, at the very first steps, we were convinced that we were the first ones to discover this nice technique but, a few years later, after a careful documentation and analysis, we realized that the same technique has been applied independently and tacitly by other mathematicians long time ago, e.g., Browder and Petryshyn \cite{BroP67} and Hicks and Kubicek \cite{Hicks}, see Berinde and P\u acurar \cite{BPac21d}. 
 
 Without explicitly indicating thei method of derivation, Browder and Petryshyn \cite{BroP67} introduced and studied the class of {\em strictly pseudocontractive mappings}, while Hicks and Kubicek \cite{Hicks}  introduced and studied the class of {\em demicontractive mappings}. These classes represent two important concepts   in the iterative approximation of fixed points, see for example Berinde \cite{Ber07}. 
 Recall that a mapping $T$ is said to be

1) {\it quasi-nonexpansive} if $Fix\,(T)\neq \emptyset$ and 
\begin{equation}\label{qne}
\|Tx-y\|\leq \|x-y\|,\, \textnormal{ for all } x\in C \textnormal{ and } y\in Fix\,(T).
\end{equation}

2) {\it $k$-strictly pseudocontractive} of the Browder-Petryshyn type (\cite{BroP66}) if there exists $k<1$ such that
\begin{equation}\label{strict}
\|Tx-Ty\|^2\leq \|x-y\|^2+k\|x-y-Tx+Ty\|^2, \forall x,y\in C.
\end{equation}

3) {\it $k$-demicontractive} (\cite{Hicks}) or {\it quasi $k$-strictly pseudocontractive}  (see Berinde et al. \cite{BPR23}) if $Fix\,(T)\neq \emptyset$ and there exists  a positive number $k<1$ such that 
\begin{equation}\label{demi}
\|Tx-y\|^2\leq \|x-y\|^2+k\|x-Tx\|^2,
\end{equation}
for all $x\in C$ and $y\in Fix\,(T)$. 
\bigskip

If we denote by $\mathcal{NE}$, $\mathcal{QNE}$, $\mathcal{SPC}$ and $\mathcal{DC}$ the classes of nonexpansive, quasi-nonexpansive, strictly pseudocontractive and demicontractive mappings, respectively, then the relationships between these classes are completely represented in the following diagram
\medskip

\begin{tikzpicture}

\draw[red, thick] node [anchor=south east]{NE} node [anchor=south west]{$\cdot \,T_2$} (-2,0) node [anchor=south west]{$\cdot \,T_1$} rectangle (1,2);
   
   \draw[blue, thick]  (-2.3,-1)  node [anchor=south west]{$\cdot \,T_3$}  rectangle (2,3) node [anchor=north east]{SPC};

      \draw[black, thick, dashed]  (-0.99,-2) node [anchor=south west]{$\cdot \,T_4$} rectangle   (4,0.99) node [anchor=north east]{QNE};
      
     \draw[green, thick]  (-1,-3) node [anchor=south west]{$\cdot \,T_5$} rectangle  (6,1)  node [anchor=north east]{DC} ;

       



\end{tikzpicture}

Figure 1. Diagram of the relationships between the classes $\mathcal{NE}$, $\mathcal{QNE}$, $\mathcal{SPC}$ and $\mathcal{DC}$
\medskip

\noindent
The diagram in Figure 1 is taken from Berinde \cite{Ber24}, where the mappings $T_1$-$T_5$ that differentiate the four classes of mappings are also considered in Examples 2.1-2.5 \cite{Ber24}.


\section{Some classes of enriched mappings}\label{s3}

Although chronologically, the first class of enriched mappings introduced in literature was the one corresponding to nonexpansive mappings, we start our presentation with enriched  Banach contractions, which are the mappings appearing in the famous Banach contraction mapping principle - the foundation stone of metrical fixed point theory.

\subsection{Enriched contractions in Banach spaces}
 \indent
 
 The concept of {\em enriched contraction} was introduced and studied in Berinde and P\u acurar \cite{BPac20}.
 \begin{definition}[\cite{BPac20}] \label{def-B}
Let $(X,\|\cdot\|)$ be a linear normed space. A mapping $T:X\rightarrow X$ is said to be an {\it enriched contraction} if there exist $b\in[0,+\infty)$ and $\theta\in[0,b+1)$ such that
\begin{equation} \label{cond-Banach}
\|b(x-y)+Tx-Ty\|\leq \theta \|x-y\|,\forall x,y \in X.
\end{equation}
To indicate the constants involved in \eqref{cond-Banach} we shall also call  $T$ a  $(b,\theta$)-{\it enriched contraction}. 
\end{definition}

\begin{example}[\cite{BPac20}] \label{ex1}
\indent

(1) A Banach contraction $T$  satisfies \eqref{cond-Banach} with $b=0$ and $\theta=c\in [0,1)$.

(2) Let $X=[0,1]$ be endowed with the usual norm and let $T:X\rightarrow X$ be defined by $Tx=1-x$, for all $x\in [0,1]$. Then   $T$ is not a Banach contraction but $T$ is a $(b,1-b)$-enriched contraction for any $b\in(0,1)$. 
\end{example}

An important fixed point theorem and convergence result for enriched contractions is stated in the next theorem.

\begin{theorem} [\cite{BPac20}]  \label{th-B}
Let $(X,\|\cdot\|)$ be a Banach space and $T:X\rightarrow X$ a $(b,\theta$)-{\it enriched contraction}. Then

$(i)$ $Fix\,(T)=\{p\}$;

$(ii)$ There exists $\lambda\in (0,1]$ such that the iterative method
$\{x_n\}^\infty_{n=0}$, given by
\begin{equation} \label{eq3a}
x_{n+1}=(1-\lambda)x_n+\lambda T x_n,\,n\geq 0,
\end{equation}
converges to p, for any $x_0\in X$;

$(iii)$ The following estimate holds
\begin{equation}  \label{3.2-1B}
\|x_{n+i-1}-p\| \leq\frac{c^i}{1-c}\cdot \|x_n-
x_{n-1}\|\,,\quad n=0,1,2,\dots;\,i=1,2,\dots,
\end{equation}
where $c=\dfrac{\theta}{b+1}$.
\end{theorem}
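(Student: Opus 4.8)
The plan is to reduce the whole statement to the classical Banach contraction mapping principle applied to one cleverly chosen averaged perturbation of $T$. Concretely, I would set $\lambda=\frac{1}{b+1}$ and work with $T_\lambda=(1-\lambda)I+\lambda T$. The point of this choice is a one-line algebraic identity: for any $x,y\in X$,
$$
T_\lambda x-T_\lambda y=\tfrac{b}{b+1}(x-y)+\tfrac{1}{b+1}(Tx-Ty)=\frac{1}{b+1}\bigl[b(x-y)+Tx-Ty\bigr],
$$
so that the enriched contraction condition \eqref{cond-Banach} translates directly into
$$
\|T_\lambda x-T_\lambda y\|=\frac{1}{b+1}\,\|b(x-y)+Tx-Ty\|\le\frac{\theta}{b+1}\,\|x-y\|=c\,\|x-y\|.
$$
Since $\theta\in[0,b+1)$ we get $c=\frac{\theta}{b+1}\in[0,1)$, and since $b\ge 0$ we get $\lambda=\frac{1}{b+1}\in(0,1]$ (with $\lambda=1$ in the degenerate case $b=0$, which is exactly the classical Banach contraction). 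Thus $T_\lambda$ is a genuine Banach contraction on the complete space $X$.

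With this reduction in hand, parts $(i)$ and $(ii)$ follow at once. I would apply the Banach contraction mapping principle to $T_\lambda$ to obtain a unique $p\in X$ with $T_\lambda p=p$, and I would observe that the iteration \eqref{eq3a} is precisely the Picard iteration $x_{n+1}=T_\lambda x_n$; hence $\{x_n\}$ converges to $p$ for every $x_0\in X$, which is $(ii)$. To upgrade ``$p$ is the unique fixed point of $T_\lambda$'' to $Fix\,(T)=\{p\}$ as in $(i)$, I would invoke the identity \eqref{b1}, namely $Fix\,(T)=Fix\,(T_\lambda)$, which holds for every $\lambda\in(0,1]$ since $(1-\lambda)x+\lambda Tx=x$ is equivalent to $Tx=x$ once $\lambda\ne 0$.

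For the error estimate $(iii)$ I would use only the contractivity of $T_\lambda$ with constant $c$. First, consecutive differences contract geometrically, $\|x_{m+1}-x_m\|\le c\,\|x_m-x_{m-1}\|$, so by induction $\|x_{m+1}-x_m\|\le c^{\,m-n+1}\|x_n-x_{n-1}\|$ for $m\ge n-1$. Then, writing $x_{n+i-1}-p$ as the telescoping series $\sum_{j\ge 0}\bigl(x_{n+i-1+j}-x_{n+i+j}\bigr)$ and combining the triangle inequality with the geometric bound gives
$$
\|x_{n+i-1}-p\|\le\sum_{j=0}^{\infty}c^{\,i+j}\,\|x_n-x_{n-1}\|=\frac{c^i}{1-c}\,\|x_n-x_{n-1}\|,
$$
which is exactly \eqref{3.2-1B}.

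I do not expect any genuine obstacle: once the averaging parameter is fixed at $\lambda=\frac{1}{b+1}$, the entire theorem is the Banach contraction principle in disguise, and the only thing requiring insight is spotting that this specific $\lambda$ collapses the enriched inequality into an ordinary contraction estimate. The two small points I would check carefully are that the admissible range $\theta<b+1$ is precisely what forces $c<1$, and that $b=0$ must be admitted as the boundary case $\lambda=1$ so that the classical contraction is recovered.
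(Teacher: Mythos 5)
Your proposal is correct and is exactly the argument behind this theorem: the paper's whole technique (and the original proof in \cite{BPac20}) consists of choosing $\lambda=\tfrac{1}{b+1}$ so that the enriched condition \eqref{cond-Banach} becomes the ordinary contraction $\|T_\lambda x-T_\lambda y\|\le c\|x-y\|$ with $c=\tfrac{\theta}{b+1}<1$, then applying the Banach principle to $T_\lambda$ together with $Fix\,(T)=Fix\,(T_\lambda)$. The a priori/a posteriori estimate you derive by telescoping is likewise the standard one, so there is nothing to add beyond noting that the stated range $n=0,1,2,\dots$ in \eqref{3.2-1B} should start at $n=1$ since the bound involves $x_{n-1}$.
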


\begin{remark}
In the particular case $b=0$, by Theorem \ref{th-B} we get the classical Banach contraction mapping principle  in the original setting of a Banach space.
\end{remark}

It is possible to establish a Maia type fixed point theorem for enriched contractions defined on a linear vector space, by endowing it with a metric $d$ which is subordinated to a norm $\|\cdot\|$. The next result has been established in Berinde \cite{Ber22}.

\begin{theorem} [\cite{Ber22}] \label{th-Maia}
Let $X$ be a linear vector space endowed with a metric $d$ and a norm $\|\cdot \|$ satisfying the condition
\begin{equation}\label{norm-metric}
d(x,y)\leq \|x-y\|,\,\textnormal{ for all } x,y\in X.
\end{equation}
Suppose

(i) $(X,d)$ is a complete metric space;

(ii) $T:X\rightarrow X$ is continuous with respect to $d$;

(iii)  $T$ is an enriched contraction with respect to $\|\cdot\|$, that is, there exist $b\in[0,+\infty)$ and $\theta\in[0,b+1)$ such that
\begin{equation}  \label{eq1.1a}
\|b(x-y)+Tx-Ty\|\leq \theta \|x-y\|,\forall x,y \in X.
\end{equation}

Then

$(i)$ $Fix\,(T)=\{p\}$, for some $p\in X$;

$(ii)$ There exists $\lambda\in (0,1]$ such that the iterative method
$\{x_n\}^\infty_{n=0}$, given by
\begin{equation}\label{iteratie}
x_{n+1}=(1-\lambda)x_n+\lambda T x_n,\,n\geq 0,
\end{equation}
converges in $(X,d) $ to $p$, for any $x_0\in X$;

$(iii)$ The estimate
\begin{equation}\label{estimare}
d(x_n,p)\leq  \frac{c^n}{1-c}\cdot \|x_{1}-x_{0}\|,\,n\geq 1
\end{equation}
and
\begin{equation}\label{estimare-1}
d(x_n,p)\leq \frac{c}{1-c}\cdot \|x_{n}-x_{n-1}\|,\,n\geq 1,
\end{equation}
hold with $c=\dfrac{\theta}{b+1}$.
\end{theorem}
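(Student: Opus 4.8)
The plan is to reduce the statement to a Maia-type argument applied to the averaged perturbation $T_\lambda=(1-\lambda)I+\lambda T$, much as in the proof of Theorem \ref{th-B}, but now keeping careful track of the two structures $d$ and $\|\cdot\|$ and exploiting the compatibility \eqref{norm-metric}. First I would choose $\lambda=\frac{1}{b+1}\in(0,1]$, so that $b=\frac{1-\lambda}{\lambda}$ and a direct computation gives $T_\lambda x-T_\lambda y=\lambda\bigl(b(x-y)+Tx-Ty\bigr)$. Substituting this into the enriched condition \eqref{eq1.1a} yields
\[
\|T_\lambda x-T_\lambda y\|\leq\frac{\theta}{b+1}\|x-y\|=c\,\|x-y\|,\quad\forall x,y\in X,
\]
with $c=\frac{\theta}{b+1}\in[0,1)$ since $\theta\in[0,b+1)$. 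Thus $T_\lambda$ is a genuine contraction in the norm, and the iteration \eqref{iteratie} is precisely its Picard iteration $x_{n+1}=T_\lambda x_n$.

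Next I would establish $d$-convergence of $(x_n)$. The norm contractivity gives $\|x_{n+1}-x_n\|\leq c^n\|x_1-x_0\|$ by induction, whence, via \eqref{norm-metric} and the triangle inequality, $d(x_{n+m},x_n)\leq\sum_{k=n}^{n+m-1}\|x_{k+1}-x_k\|\leq\frac{c^n}{1-c}\|x_1-x_0\|$. Since $c<1$, the sequence $(x_n)$ is $d$-Cauchy, and the completeness of $(X,d)$ furnishes a limit $p\in X$ with $x_n\to p$ in $d$.

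The delicate point, and the one I expect to be the main obstacle, is passing to the limit to conclude $Tp=p$. One cannot simply invoke continuity of $T_\lambda$, since $d$ need not be compatible with the linear operations, so $T_\lambda$ need not be $d$-continuous even though $T$ is. The remedy is to use asymptotic regularity: from $x_{n+1}-x_n=\lambda(Tx_n-x_n)$ and \eqref{norm-metric} we obtain $d(Tx_n,x_n)\leq\|Tx_n-x_n\|=\frac{1}{\lambda}\|x_{n+1}-x_n\|\to0$. Then the three-term estimate $d(Tp,p)\leq d(Tp,Tx_n)+d(Tx_n,x_n)+d(x_n,p)$ forces $d(Tp,p)=0$, because the first term tends to $0$ by the $d$-continuity of $T$, the second by asymptotic regularity, and the third by convergence. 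Hence $p\in Fix(T)=Fix(T_\lambda)$ by \eqref{b1}.

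Finally I would settle the remaining items. Uniqueness follows from $Fix(T)=Fix(T_\lambda)$: if $p,q$ are fixed points then $\|p-q\|=\|T_\lambda p-T_\lambda q\|\leq c\|p-q\|$, which forces $p=q$ because $c<1$; this proves $(i)$ and completes $(ii)$. The a priori estimate \eqref{estimare} is obtained by letting $m\to\infty$ in $d(x_{n+m},x_n)\leq\frac{c^n}{1-c}\|x_1-x_0\|$ and using the continuity of $d$ in its arguments, while the a posteriori estimate \eqref{estimare-1} follows analogously from the sharper bound $\|x_{k+1}-x_k\|\leq c^{\,k-n+1}\|x_n-x_{n-1}\|$.
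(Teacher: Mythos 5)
Your proof is correct and follows the intended argument: the paper itself only cites \cite{Ber22} without reproducing the proof, but the standard route is exactly yours --- pass to $T_\lambda$ with $\lambda=1/(b+1)$ to get a norm-contraction with factor $c=\theta/(b+1)$, use \eqref{norm-metric} to transfer the Cauchy estimates to $d$, invoke $d$-completeness, and identify the limit as a fixed point. Your handling of the delicate step (that $T_\lambda$ need not be $d$-continuous, so one must combine the $d$-continuity of $T$ with the asymptotic regularity $d(Tx_n,x_n)\le\lambda^{-1}\|x_{n+1}-x_n\|\to0$ in a three-term estimate) is precisely the point that makes the Maia-type adaptation work.
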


\subsection{Enriched Kannan mappings  in Banach spaces}
\indent
 
 The concept of {\em enriched Kannan mapping} was introduced and studied in Berinde and P\u acurar \cite{BPac21b}, where some applications for solving split feasibility and variational inequality problems are also presented.
\begin{definition} [\cite{BPac21b}]\label{def2}
Let $(X,\|\cdot\|)$ be a linear normed space. A mapping $T:X\rightarrow X$ is said to be an {\it enriched Kannan mapping} if there exist  $a\in[0,1/2)$ and $k\in[0,\infty)$ such that
\begin{equation} \label{cond_eKannan}
\|k(x-y)+Tx-Ty\|\leq a \left[\|x-Tx\|+\|y-Ty\|\right],\textnormal{ for all } x,y \in X.
\end{equation}
To indicate the constants involved in \eqref{cond_eKannan} we shall also call  $T$ a  $(k,a$)-{\it enriched Kannan mapping}. 
\end{definition}

\begin{example}  \label{ex2a}
\indent

(1) Any Kannan mapping is a $(0,a)$-enriched Kannan mapping, i.e., it satisfies \ref{cond_eKannan} with $k=0$.

(2) Let $X=[0,1]$ be endowed with the usual norm and $T:X\rightarrow X$ be defined by $Tx=1-x$, for all $x\in [0,1]$. 
Then $T$ is not a Kannan mapping but $T$ is an enriched Kannan mapping ($T$ is also nonexpansive). 
\end{example}

The next result provides a convergence theorem for the Krasnoselskij iterative method used to approximate the fixed points of {\it enriched} Kannan mappings.

\begin{theorem} [\cite{BPac21b}] \label{th-K}
Let $(X,\|\cdot\|)$ be a Banach space and $T:X\rightarrow X$ a $(k,a$)-{\it enriched Kannan mapping}. Then

$(i)$ $Fix\,(T)=\{p\}$;

$(ii)$ There exists $\lambda\in (0,1]$ such that the iterative method
$\{x_n\}^\infty_{n=0}$, given by
\begin{equation} \label{eq3a}
x_{n+1}=(1-\lambda)x_n+\lambda T x_n,\,n\geq 0,
\end{equation}
converges to p, for any $x_0\in X$;

$(iii)$ The following estimate holds
\begin{equation}  \label{3.2-1K}
\|x_{n+i-1}-p\| \leq\frac{\delta^i}{1-\delta}\cdot \|x_n-
x_{n-1}\|\,,\quad n=0,1,2,\dots;\,i=1,2,\dots
\end{equation}
where $\delta=\frac{a}{1-a}$.
\end{theorem}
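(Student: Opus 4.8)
The plan is to reduce the theorem to the classical Kannan fixed point theorem by passing to the averaged operator $T_\lambda = (1-\lambda)I + \lambda T$. I would take $\lambda = \frac{1}{k+1}$, which lies in $(0,1]$ since $k \in [0,\infty)$, and exploit the algebraic identity $\lambda k = 1-\lambda$ that this choice produces; this is precisely the relation that linearizes the enriched condition \eqref{cond_eKannan} into an ordinary Kannan inequality.

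First I would compute $T_\lambda x - T_\lambda y = (1-\lambda)(x-y) + \lambda(Tx-Ty)$ and, using $1-\lambda = \lambda k$, rewrite the right-hand side as $\lambda\bigl[k(x-y) + Tx - Ty\bigr]$. Multiplying \eqref{cond_eKannan} by $\lambda$ then gives $\|T_\lambda x - T_\lambda y\| \le \lambda a\bigl[\|x-Tx\| + \|y-Ty\|\bigr]$. Since $x - T_\lambda x = \lambda(x-Tx)$, so that $\|x-Tx\| = \frac{1}{\lambda}\|x - T_\lambda x\|$, substituting this back cancels the factor $\lambda$ and yields $\|T_\lambda x - T_\lambda y\| \le a\bigl[\|x - T_\lambda x\| + \|y - T_\lambda y\|\bigr]$. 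This is exactly the statement that $T_\lambda$ is a Kannan mapping with the same constant $a \in [0,1/2)$.

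With this reduction in hand, I would apply the classical Kannan theorem to $T_\lambda$ on the complete space $X$: it provides a unique fixed point $p$ and convergence of the Picard iteration $x_{n+1} = T_\lambda x_n$ to $p$. Because the Picard iteration of $T_\lambda$ coincides with the Krasnoselskij iteration \eqref{eq3a} of $T$, and because $Fix\,(T) = Fix\,(T_\lambda)$ by \eqref{b1}, assertions $(i)$ and $(ii)$ follow at once for this $\lambda$. For $(iii)$ I would record the one-step estimate for the Kannan iteration, $\|x_{n+1}-x_n\| \le \delta\|x_n - x_{n-1}\|$ with $\delta = \frac{a}{1-a}$; here $a < 1/2$ forces $\delta < 1$, so iterating this bound and summing the resulting geometric series produces the estimate \eqref{3.2-1K}.

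The only genuine computation is the reduction in the second paragraph; once $T_\lambda$ is recognized as a bona fide Kannan mapping, existence, uniqueness, convergence and the error bound are all immediate consequences of standard Kannan theory. I do not expect a real obstacle, since the identity $\lambda k = 1-\lambda$ makes $\lambda = \frac{1}{k+1}$ the natural --- indeed essentially forced --- averaging parameter that converts the enriched inequality into the classical one.
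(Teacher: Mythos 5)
Your proposal is correct and follows exactly the route of the original proof in the cited source: the choice $\lambda=\frac{1}{k+1}$, the identities $T_\lambda x-T_\lambda y=\lambda\left[k(x-y)+Tx-Ty\right]$ and $x-T_\lambda x=\lambda(x-Tx)$, the conclusion that $T_\lambda$ is a Kannan mapping with the same constant $a$, and the transfer of the classical Kannan conclusions back to $T$ via $Fix\,(T)=Fix\,(T_\lambda)$ constitute precisely the ``enriching'' technique this survey is built around. The only point worth noting is that when $k=0$ one has $\lambda=1$ and $T_\lambda=T$, so the identification of fixed-point sets is trivial there rather than an instance of \eqref{b1}, which is stated for $\lambda\in(0,1)$.
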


\begin{remark}
The notion of enriched Kannan mapping has been extended to {\em enriched Bianchini mapping} for which corresponding existence and approximation results that generalize Theorem \ref{th-K} were also established in Berinde and P\u acurar \cite{BPac21b}.
\end{remark}

\subsection{Enriched \' Ciri\' c-Reich-Rus contractions in Banach spaces}\indent
\medskip

It is possible to unify and extend Theorems \ref{th-B} and \ref{th-K} from the previous sections and thus obtain a fixed point theorem for the so called {\em enriched \' Ciri\' c-Reich-Rus contractions}. This concept has been first introduced in Berinde and P\u acurar \cite{BPac21a}, in a particular case, and then was improved to the currentt version in Berinde and P\u acurar \cite{BPac22}.

\begin{definition} [\cite{BPac22}] \label{def-CRR}
Let $(X,\|\cdot\|)$ be a linear normed space and $T:X\rightarrow X$  a self mapping. $T$ is  a $(k,a, b$)-{\it enriched \' Ciri\' c-Reich-Rus contraction} if, for some  $k\in[0,\infty)$ and $a, b\geq 0$, satisfying $\dfrac{a}{k+1}+2b<1$, the following condition holds.
\begin{equation} \label{cond_eCRRg}
\|k(x-y)+Tx-Ty\|\leq a \|x-y\|+b \left(\|x-Tx\|+\|y-Ty\|\right),\textnormal{ for all } x,y \in X.
\end{equation}
\end{definition}

\begin{remark}
\indent

1) A \' Ciri\' c-Reich-Rus contraction satisfies \eqref{cond_eCRRg} with $k=0$. 

2)  If $b=0$, then from  \eqref{cond_eCRRg} we obtain the contraction condition \eqref{cond-Banach} that defines enriched contractions, with $k\in[0,+\infty)$ and $a\in[0,k+1)$.  

3) If $a=0$, then from  \eqref{cond_eCRRg} we obtain the contraction condition \eqref{cond_eKannan} satisfied by an enriched Kannan mapping.

\end{remark}

\begin{theorem} [\cite{BPac22}]  \label{th-CRR}
Let $(X,\|\cdot\|)$ be a Banach space and $T:X\rightarrow X$ a $(k,a,b$)-{\it enriched \' Ciri\' c-Reich-Rus contraction} in the sense of Definition \ref{def-CRR}. Then

$(i)$ $Fix\,(T)=\{p\}$;

$(ii)$ There exists $\lambda\in (0,1]$ such that the iterative method
$\{y_n\}^\infty_{n=0}$, given by
\begin{equation}\label{iteratie}
y_{n+1}=(1-\lambda)y_n+\lambda T y_n,\,n\geq 0,
\end{equation}
converges to $p$, for any $y_0\in X$;

$(iii)$ The following estimates hold
\begin{equation}\label{estimare}
\|y_{n}-p\| \leq 
\begin{cases} 
\alpha^n \cdot \|y_0-p\|,n\geq 0\\ \medskip\dfrac{\alpha}{1-\alpha}\cdot \|y_n-
y_{n-1}\|,n\geq 1
\end{cases}
\end{equation}
where $\alpha=\dfrac{a+(k+1)b}{(k+1)(1-b)}$.

\end{theorem}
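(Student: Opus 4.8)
The plan is to reproduce, at the level of the unifying condition \eqref{cond_eCRRg}, the enriching mechanism already exploited for Theorems \ref{th-B} and \ref{th-K}: instead of analysing $T$ itself, I would pass to its averaged perturbation $T_\lambda = (1-\lambda)I + \lambda T$ for a well-chosen $\lambda$, prove that $T_\lambda$ is an \emph{ordinary} (non-enriched) \'Ciri\'c-Reich-Rus contraction, and then read off the fixed point, convergence and error statements for $T$ from the corresponding classical facts for $T_\lambda$. The correct choice is $\lambda = \frac{1}{k+1}$, which belongs to $(0,1]$ for every $k \geq 0$ and reduces to $\lambda = 1$ exactly when $k = 0$, so that the enriched case genuinely contains the classical one.

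With $\lambda = \frac{1}{k+1}$, everything rests on two elementary identities. From $T_\lambda x = \frac{1}{k+1}(kx + Tx)$ I obtain $T_\lambda x - T_\lambda y = \frac{1}{k+1}\bigl(k(x-y) + Tx - Ty\bigr)$, whence $\|T_\lambda x - T_\lambda y\| = \frac{1}{k+1}\|k(x-y)+Tx-Ty\|$; and from $x - T_\lambda x = \frac{1}{k+1}(x - Tx)$ I obtain $\|x - Tx\| = (k+1)\|x - T_\lambda x\|$. Feeding the enriched inequality \eqref{cond_eCRRg} into the first identity and rewriting the displacement terms by the second, I would arrive at
\begin{equation*}
\|T_\lambda x - T_\lambda y\| \leq \frac{a}{k+1}\,\|x-y\| + b\,\bigl(\|x - T_\lambda x\| + \|y - T_\lambda y\|\bigr), \quad \forall\, x,y \in X.
\end{equation*}
Hence $T_\lambda$ is a \'Ciri\'c-Reich-Rus contraction with constants $a' = \frac{a}{k+1}$ and $b' = b$, and the hypothesis $\frac{a}{k+1} + 2b < 1$ is precisely the admissibility condition $a' + 2b' < 1$.

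Since $X$ is a Banach space, the classical \'Ciri\'c-Reich-Rus theorem applies to $T_\lambda$: it has a unique fixed point $p$, and its Picard iteration $y_{n+1} = T_\lambda y_n$ converges to $p$. This Picard iteration is exactly the Krasnoselskij scheme \eqref{iteratie} for $T$, and by \eqref{b1} we have $Fix\,(T) = Fix\,(T_\lambda) = \{p\}$, which yields $(i)$ and $(ii)$. For $(iii)$, the derived inequality applied with $y$ replaced by the fixed point $p$ gives, after the usual rearrangement, $\|T_\lambda x - p\| \leq \alpha\,\|x - p\|$ with $\alpha = \frac{a'+b'}{1-b'} = \frac{a+(k+1)b}{(k+1)(1-b)}$, exactly the stated contraction factor; iterating this produces the a priori bound $\|y_n - p\| \leq \alpha^n\|y_0 - p\|$, and combining it with the triangle inequality produces the a posteriori bound $\|y_n - p\| \leq \frac{\alpha}{1-\alpha}\|y_n - y_{n-1}\|$, as in \eqref{estimare}.

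The main obstacle --- really the only substantive step --- is the bookkeeping in the second paragraph. The delicate point is that the displacement terms for $T$ and for $T_\lambda$ differ by the factor $k+1$, and this factor must cancel \emph{exactly} against the $\frac{1}{k+1}$ generated by averaging, so that the coefficient of the Kannan-type terms is preserved as $b$ while only the coefficient of $\|x-y\|$ is contracted to $\frac{a}{k+1}$. Once this transfer is verified, parts $(i)$--$(iii)$ are either immediate consequences of the classical theorem or routine contraction estimates.
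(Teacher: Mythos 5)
Your proposal is correct and follows exactly the technique the paper describes (and that the cited source \cite{BPac22} uses): pass to the averaged map $T_\lambda$ with $\lambda=\frac{1}{k+1}$, verify that the factor $k+1$ from the displacement identity $x-T_\lambda x=\frac{1}{k+1}(x-Tx)$ cancels the $\frac{1}{k+1}$ from averaging so that $T_\lambda$ is an ordinary \'Ciri\'c--Reich--Rus contraction with constants $\frac{a}{k+1}$ and $b$, and then transfer the classical conclusions back via $Fix\,(T)=Fix\,(T_\lambda)$. The constant $\alpha=\frac{a+(k+1)b}{(k+1)(1-b)}$ and both error estimates come out exactly as stated, so there is nothing to correct.
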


\begin{remark}
\indent

As mentioned before, a preliminary version of the concept of enriched \' Ciri\' c-Reich-Rus contraction in Definition \ref{def-CRR} has been introduced and studied in Berinde and P\u acurar \cite{BPac21a}.
\end{remark}

\subsection{Enriched Chatterjea mappings in Banach spaces}
\indent
 
 The notion of {\em enriched Chatterjea mapping} was introduced and studied in Berinde and P\u acurar \cite{BPac21}.
\begin{definition}[\cite{BPac21}] \label{thChat}
Let $(X,\|\cdot\|)$ be a linear normed space. A mapping $T:X\rightarrow X$ is said to be an {\it enriched Chatterjea mapping} if there exist $b\in[0,1/2)$ and $k\in[0,+\infty)$  such that
$$
\|k(x-y)+Tx-Ty\|\leq  b \left[\|(k+1)(x-y)+y-Ty\|+\right.
$$
\begin{equation} \label{Def_eChatterjea}
\left.+\|(k+1)(y-x)+x-Tx\|\right]\|,\forall x,y \in X.
\end{equation}
To indicate the constants involved in \eqref{Def_eChatterjea} we shall call  $T$ a $(k,b$)-{\it enriched Chatterjea mapping}. 
\end{definition}

\begin{example} \label{ex1a-C}
\indent

1) A Chatterjea mapping satisfies \eqref{Def_eChatterjea} with $k=0$.

2) All Banach contractions with constant $c<\dfrac{1}{3}$, all Kannan mappings with Kannan constant $a<\dfrac{1}{4}$ and all Chatterjea mappings are enriched Chatterjea mappings, i.e.,  they satisfy \eqref{Def_eChatterjea} with $k=0$. 

3) $T$ in Example \ref{ex2a} (2) is also an enriched Chatterjea mapping. 

\end{example}

\begin{theorem} [\cite{BPac21}]  \label{th-C}
Let $(X,\|\cdot\|)$ be a Banach space and $T:X\rightarrow X$ a $(k,b$)-{\it enriched Chatterjea mapping}. Then

$(i)$ $Fix\,(T)=\{p\}$;

$(ii)$ There exists $\lambda\in (0,1]$ such that the iterative method
$\{x_n\}^\infty_{n=0}$, given by
\begin{equation} \label{3aa}
x_{n+1}=(1-\lambda)x_n+\lambda T x_n,\,n\geq 0,
\end{equation}
converges to p, for any $x_0\in X$;

$(iii)$ The following estimate holds
\begin{equation}  \label{3.2-1C}
\|x_{n+i-1}-p\| \leq\frac{\delta^i}{1-\delta}\cdot \|x_n-
x_{n-1}\|\,,\quad n=0,1,2,\dots;\,i=1,2,\dots
\end{equation}
where $\delta=\dfrac{b}{1-b}$.
\end{theorem}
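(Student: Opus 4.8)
The plan is to reduce the statement to the classical Chatterjea fixed point theorem by passing to the averaged perturbation $T_\lambda = (1-\lambda)I + \lambda T$ for a carefully chosen parameter. The decisive choice is $\lambda = \dfrac{1}{k+1}$; since $k \in [0,+\infty)$, this $\lambda$ lies in $(0,1]$, which is exactly the range asserted in part $(ii)$. With this choice one has $\lambda(k+1)=1$, and I would first record the three algebraic identities
\[
T_\lambda x - T_\lambda y = \lambda\bigl[k(x-y) + Tx - Ty\bigr],
\]
\[
x - T_\lambda y = \lambda\bigl[(k+1)(x-y) + y - Ty\bigr], \quad y - T_\lambda x = \lambda\bigl[(k+1)(y-x) + x - Tx\bigr],
\]
each of which follows at once by expanding $T_\lambda = (1-\lambda)I + \lambda T$ and using $\lambda(k+1)=1$.

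Substituting these identities into the enriched Chatterjea inequality \eqref{Def_eChatterjea} and dividing through by the common positive factor $\lambda$, the condition on $T$ becomes precisely
\[
\|T_\lambda x - T_\lambda y\| \leq b\bigl[\|x - T_\lambda y\| + \|y - T_\lambda x\|\bigr], \quad \forall x,y \in X,
\]
that is, $T_\lambda$ is an ordinary Chatterjea mapping with the \emph{same} constant $b \in [0,1/2)$. This equivalence is the heart of the argument, and I expect the main (though still routine) obstacle to be making the matching of the two right-hand side terms exact: forcing the coefficients of $(x-y)$ and of the residuals $y-Ty$, $x-Tx$ to line up simultaneously is what pins down the value $\lambda = 1/(k+1)$.

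Once this reduction is in place, I would apply the classical theory of Chatterjea mappings to $T_\lambda$. Writing $x_{n+1} = T_\lambda x_n$, which is exactly the Krasnoselskij iteration \eqref{3aa} of $T$, one estimates $\|x_{n+1}-x_n\| = \|T_\lambda x_n - T_\lambda x_{n-1}\|$ and, after applying the triangle inequality to the right-hand side, obtains $\|x_{n+1}-x_n\| \le \delta\,\|x_n - x_{n-1}\|$ with $\delta = \dfrac{b}{1-b}$. Since $b<1/2$ gives $\delta<1$, the sequence $\{x_n\}$ is Cauchy, hence convergent in the Banach space $X$ to some $p$; a short limiting argument in the Chatterjea inequality shows $T_\lambda p = p$, and testing the inequality on two fixed points yields $\|p-q\| \le 2b\,\|p-q\|$, so that $T_\lambda$ has the unique fixed point $p$. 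Telescoping the contraction estimate then produces the a priori bound \eqref{3.2-1C}.

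Finally, I would invoke the identity $Fix\,(T) = Fix\,(T_\lambda)$ recorded in \eqref{b1}, valid for every $\lambda \in (0,1)$ and trivially also for $\lambda = 1$, to transfer the conclusions back to $T$: one gets $Fix\,(T) = Fix\,(T_\lambda) = \{p\}$, which is $(i)$, while $(ii)$ and $(iii)$ are immediate, since the Krasnoselskij iteration of $T$ coincides with the Picard iteration of $T_\lambda$.
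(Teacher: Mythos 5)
Your proposal is correct and follows exactly the technique the paper is built around: choosing $\lambda = 1/(k+1)$ so that the enriched Chatterjea condition for $T$ becomes the classical Chatterjea condition for $T_\lambda$ with the same constant $b$, then applying the classical argument to the Picard iteration of $T_\lambda$ (which is the Krasnoselskij iteration of $T$) and transferring back via $Fix\,(T)=Fix\,(T_\lambda)$. This is precisely the route of the cited source \cite{BPac21}, so there is nothing to add.
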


\subsection{Enriched almost contractions in Banach spaces}
\indent
 
 The notion of {\em enriched almost contraction} has been introduced and studied in Berinde and P\u acurar \cite{BPac23}. It is very general and unifies and extend all the previous concepts of enriched contractive type mappings.

\begin{definition} [\cite{BPac23}] \label{def-AC}
Let $(X,\|\cdot\|)$ be a linear normed space. A mapping $T:X\rightarrow X$ is said to be an {\it enriched almost contraction} if there exist $b\in[0,\infty)$, $\theta\in (0,b+1)$ and $L\geq 0$ such that
\begin{equation} \label{cond-AC}
\|b(x-y)+Tx-Ty\|\leq \theta \|x-y\|+L\|b(x-y)+Tx-y\|,
\end{equation}
for all $x,y \in X$.
To indicate the constants involved in \eqref{cond-AC} we shall also call $T$ as an  {\it enriched} $(b,\theta, L)$-{\it  almost contraction}. 
\end{definition}

\begin{example}[ ] \label{ex-AC}
\indent

1) Any $(\delta,L)$-almost contraction is an enriched $(0,\delta,L)$-almost contraction, i.e., it satisfies \eqref{cond-AC} with $b=0$ and $\theta=\delta$;

2) Any $(b,\theta)$-enriched  contraction  is an enriched $\left(b,\theta, 0\right)$-almost contraction;

3) Any  $(k,a$)-{\it enriched Kannan mapping}  is an enriched $\left(k, \dfrac{a}{1-a}, \dfrac{2 a}{1-a}\right)$-almost contraction;

4) Any $(k,b$)-{\it enriched Chatterjea mapping}  is an enriched $\left(k, \dfrac{b}{1-b}, \dfrac{2 b}{1-b}\right)$-almost contraction.
\end{example}

The next result unifies all main results in the previous subsections, i.e., Theorem \ref{th-B}, Theorem \ref{th-K}, Theorem \ref{th-CRR} and Theorem \ref{th-C}, and provides a Krasnoselskij method for approximating the fixed points of enriched almost contractions.

\begin{theorem} [\cite{BPac23}] \label{th-AC}
Let $(X,\|\cdot\|)$ be a Banach space and let $T:X\rightarrow X$ be a $(b,\theta,L)$-almost contraction.

Then

$1)$ $Fix\,(T)\neq\emptyset$;

$2)$ For any $x_0\in X$, there exists $\lambda\in (0,1)$ such that the Krasnoselskij iteration 
$\{x_n\}^\infty_{n=0}$, defined by,  
\begin{equation} \label{eq3a}
x_{n+1}=(1-\lambda)x_n+\lambda T x_n,\,n\geq 0,
\end{equation}
converges to some $x^\ast\in Fix\,(T)$, for any $x_0\in X$;

$3)$ The  estimate \eqref{3.2-1C} holds
with $\delta=\dfrac{\theta} {b+1}$.
\end{theorem}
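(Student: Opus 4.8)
The plan is to prove Theorem~\ref{th-AC} by exactly the enrichment device already behind Theorem~\ref{th-B}: instead of analysing $T$ directly, I would pass to the averaged map $T_\lambda=(1-\lambda)I+\lambda T$ and show that, for one distinguished value of $\lambda$, the defining inequality \eqref{cond-AC} collapses into the classical $(\delta,L)$-almost contraction condition for $T_\lambda$. Since by \eqref{b1} we have $Fix\,(T)=Fix\,(T_\lambda)$, and since the Krasnoselskij iteration \eqref{eq3a} for $T$ is literally the Picard iteration $x_{n+1}=T_\lambda x_n$, every conclusion obtained for $T_\lambda$ transfers verbatim to $T$. When $b=0$ the hypothesis is already the classical almost contraction condition and the assertion is the known Picard-iteration theorem for almost contractions; so I would assume $b>0$ and set $\lambda=\frac{1}{b+1}\in(0,1)$.

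With this choice one has $1-\lambda=\frac{b}{b+1}$, and a direct computation yields the two algebraic identities
\[
b(x-y)+Tx-Ty=(b+1)\,(T_\lambda x-T_\lambda y),\qquad b(x-y)+Tx-y=(b+1)\,(T_\lambda x-y).
\]
Substituting both into \eqref{cond-AC} and dividing through by $b+1>0$ gives
\[
\|T_\lambda x-T_\lambda y\|\le \frac{\theta}{b+1}\,\|x-y\|+L\,\|T_\lambda x-y\|,\qquad \forall\, x,y\in X.
\]
Since $\theta\in(0,b+1)$, the coefficient $\delta=\frac{\theta}{b+1}$ lies in $(0,1)$, so this is precisely the statement that $T_\lambda$ is a $(\delta,L)$-almost contraction with the same additive constant $L$.

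It then remains to run the standard convergence argument for $T_\lambda$. Writing $x_n=T_\lambda^n x_0$ and applying the transformed inequality with $x=x_{n-1}$, $y=x_n$ — so that its residual term $\|T_\lambda x_{n-1}-x_n\|=\|x_n-x_n\|=0$ vanishes — one obtains $\|x_{n+1}-x_n\|\le\delta\,\|x_n-x_{n-1}\|$. Iterating this geometric decay and telescoping shows $\{x_n\}$ is Cauchy, hence convergent in the Banach space $X$ to some $x^\ast$. Applying the inequality once more, now with $x=x_n$, $y=x^\ast$, its residual term equals $\|x_{n+1}-x^\ast\|\to0$, so letting $n\to\infty$ forces $T_\lambda x^\ast=x^\ast$; thus $x^\ast\in Fix\,(T_\lambda)=Fix\,(T)$, which proves $1)$ and $2)$. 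Finally, summing the tail of the geometric series built from $\|x_{n+1}-x_n\|\le\delta\,\|x_n-x_{n-1}\|$ yields the estimate \eqref{3.2-1C} with $\delta=\frac{\theta}{b+1}$, exactly as in the Banach contraction case.

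The subtle point — and the only place where almost contractions behave differently from genuine contractions — is the asymmetry of the residual term $\|T_\lambda x-y\|$. One must apply the inequality in the specific orderings above (first $x=x_{n-1},\,y=x_n$ to kill the residual and force the decay, then $x=x_n,\,y=x^\ast$ to identify the limit as a fixed point); the reversed orderings do not control $\|x_{n+1}-x_n\|$. For the same reason uniqueness fails in general, which is why the conclusion is only $Fix\,(T)\neq\emptyset$ and not $Fix\,(T)=\{p\}$, and the estimate \eqref{3.2-1C} is read with respect to the particular limit $p=x^\ast$ of the chosen iteration. I expect this bookkeeping, together with verifying the two identities above, to be the main (though essentially routine) content of the proof.
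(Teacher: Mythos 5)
Your proposal is correct and follows exactly the enrichment technique that the paper (and the cited source \cite{BPac23}) uses for this result: set $\lambda=\tfrac{1}{b+1}$, observe that \eqref{cond-AC} is precisely the classical $(\delta,L)$-almost contraction condition for $T_\lambda$ with $\delta=\tfrac{\theta}{b+1}$, and then transfer the standard Picard-iteration argument for almost contractions back to $T$ via $Fix\,(T)=Fix\,(T_\lambda)$. The survey itself omits the proof, but your reduction, the two algebraic identities, and the careful ordering of arguments in the asymmetric residual term are exactly the intended argument.
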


The next example is remarkable and illustrates the great generality of enriched almost contractions and therefore of Theorem \ref{th-AC} itself.

\begin{example} [\cite{BPac23}]  \label{ex-AC2}

Let $X=\left[0,\frac{4}{3}\right]$ with the usual norm and $T: X\rightarrow X$ be given by
\begin{equation} \label{eac}
Tx=
\begin{cases}
1-x, \textnormal{ if } x\in \left[0,\frac{2}{3}\right)\\
\smallskip
2-x, \textnormal{ if } x\in \left[\frac{2}{3}, \frac{4}{3}\right].
\end{cases}
\end{equation}
Then $Fix\,(T)=\left\{\dfrac{1}{2},1\right\}$ and:

1) $T$ is a $(1,\theta, 3)$-enriched almost contraction, for any $\theta\in (0,2)$;

2) $T$ is not an almost contraction;

3) $T$ does not belong to the classes of enriched contractions, enriched Kannan mappings or enriched Chatterjea mappings;

4) $T$ is neither nonexpansive nor quasi-nonexpansive;

5)  $T$ is not an enriched nonexpansive mapping.

\end{example}

\begin{remark}

It is important to note that, in view of Theorems \ref{th-B}-\ref{th-C}, enriched contractions,  enriched Kannan mappings, enriched \' Ciri\' c-Reich-Rus contractions and enriched Chatterjea mappings have all a unique fixed point, while enriched almost contractions - which include all these classes of mappings - could have two or more fixed points.

\end{remark}

\subsection{Enriched $\varphi$-contractions  in Banach spaces} 
\indent
 
 The notion of {\em enriched $\varphi$-contraction}  has been introduced and studied in Berinde et al. \cite{Ber22a}.
 
 A function $\varphi :\mathbb{R}_{+}\rightarrow \mathbb{R}_{+}$ is said to be a {\it comparison
function} (see for example \cite{Ber07}), if  the following two conditions hold:

(i$_{\varphi }$) \thinspace \thinspace \thinspace $\varphi $ is
nondecreasing, i.e., $t_{1}\leq t_{2}$ implies $\varphi
(t_{1})\leq \varphi (t_{2}).$

(ii$_{\varphi }$)\thinspace \thinspace \thinspace \thinspace $\{\varphi
^{n}(t)\}$ converges to 0 for all $t\geq 0$.

It is obvious that any comparison function also possesses the following property: 

(iii$_{\varphi }$) $\varphi(t)<t$, for $t>0$.
\smallskip

Some examples of comparison functions are the following:
$$
\varphi(t)=\frac{t}{t+1}, t\in [0,\infty);\, \varphi(t)=\frac{t}{2}, t\in [0,1] \textnormal{ and }  \varphi(t)=t-\frac{1}{3}, t\in (1,\infty);\,
$$
(one can note that a comparison function is not necessarily continuous).

\begin{definition} [\cite{Ber22a}]\label{def1-phi}
Consider a linear normed space $(X,\|\cdot\|)$ and let $T:X\rightarrow X$ be a self mapping. $T$ is said to be an {\it enriched $\varphi$-contraction} if one can find a constant  $b\in[0,+\infty)$ and a comparison function $\varphi$ such that
\begin{equation} \label{eq3phi}
\|b(x-y)+Tx-Ty\|\leq (b+1)\varphi (\|x-y\|),\forall x,y \in X.
\end{equation}
We shall also call $T$ a  $(b,\varphi$)-{\it enriched contraction}. 
\end{definition}

\begin{example} \label{ex1-phi}
\indent

1) Any  $(b,\theta$)-{\it enriched contraction} is an enriched $\varphi$-contraction with $\varphi(t)=\dfrac{\theta}{b+1}\cdot  t$. 

2) Any $\varphi$-contraction is a $(0,\varphi$)-enriched contraction.

3) Consider $X$ to be the unit interval $[0,1]$ of $\mathbb{R}$ endowed with the usual norm and the function $T:X\rightarrow X$ given by $Tx=1-x$, for all $x\in [0,1]$. Then  $T$ is neither a contraction nor a $\varphi$-contraction but $T$ is an enriched $\phi$-contraction (as it is an enriched contraction). 
\end{example}

\begin{theorem}[\cite{Ber22a}]\label{th4-phi}
Let $(X,\|\cdot\|)$ be a Banach space and $T:X\rightarrow X$ an enriched $(b,\varphi)$-contraction. Then

$(i)$ $Fix\,(T)=\{p\}$;

$(ii)$ There exists $\lambda\in (0,1]$ such that the iterative method
$\{x_n\}^\infty_{n=0}$, given by
\begin{equation} \label{eq3a}
x_{n+1}=(1-\lambda)x_n+\lambda T x_n,\,n\geq 0,
\end{equation}
and $x_0\in X$ arbitrary, converges strongly to $p$;
\end{theorem}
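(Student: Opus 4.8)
The plan is to reduce the assertion to the classical fixed point theorem for ordinary $\varphi$-contractions, exactly in the spirit of the proofs of Theorems \ref{th-B}--\ref{th-C}. The key observation is that the Krasnoselskij iteration $x_{n+1}=(1-\lambda)x_n+\lambda Tx_n$ is precisely the Picard iteration $x_{n+1}=T_\lambda x_n$ of the averaged map $T_\lambda=(1-\lambda)I+\lambda T$, and that by \eqref{b1} one has $Fix\,(T)=Fix\,(T_\lambda)$. Hence it suffices to exhibit a value of $\lambda\in(0,1]$ for which $T_\lambda$ is a genuine $\varphi$-contraction, and then to transfer the existence, uniqueness and convergence conclusions from $T_\lambda$ back to $T$.

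First I would fix the correct parameter. Choosing $\lambda=\frac{1}{b+1}\in(0,1]$ (which is legitimate since $b\geq 0$) yields the algebraic identity $1-\lambda=\lambda b$, so that
$$T_\lambda x-T_\lambda y=(1-\lambda)(x-y)+\lambda(Tx-Ty)=\lambda\big[b(x-y)+Tx-Ty\big].$$
Taking norms and invoking the defining inequality \eqref{eq3phi}, together with $\lambda(b+1)=1$, gives
$$\|T_\lambda x-T_\lambda y\|=\lambda\,\|b(x-y)+Tx-Ty\|\leq \lambda(b+1)\,\varphi(\|x-y\|)=\varphi(\|x-y\|)$$
for all $x,y\in X$. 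Thus $T_\lambda$ is an ordinary $\varphi$-contraction on the Banach (hence complete) space $X$.

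Granting this reduction, conclusions $(i)$ and $(ii)$ follow from the classical fixed point theorem for $\varphi$-contractions (see, e.g., \cite{Ber07}): $T_\lambda$ has a unique fixed point $p$, and its Picard iteration --- which is exactly the iteration in the statement --- converges strongly to $p$ from every $x_0\in X$. Since $Fix\,(T)=Fix\,(T_\lambda)$, this gives $Fix\,(T)=\{p\}$. The uniqueness is in fact immediate: if $T_\lambda p=p$ and $T_\lambda q=q$ with $p\neq q$, the $\varphi$-contraction inequality forces $\|p-q\|\leq\varphi(\|p-q\|)<\|p-q\|$ by the property $\varphi(t)<t$ for $t>0$, a contradiction.

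The only genuinely substantial point, and the one I expect to be the main obstacle, is the convergence part of the $\varphi$-contraction theorem, because the comparison-function axioms are considerably weaker than a Lipschitz bound. The routine half is the estimate $\|x_{n+1}-x_n\|\leq\varphi^n(\|x_1-x_0\|)$, which tends to $0$ by monotonicity of $\varphi$ and the axiom that $\{\varphi^n(t)\}\to 0$. The delicate half is upgrading this to the Cauchy property. Here I would fix $\varepsilon>0$, use $\varphi(\varepsilon)<\varepsilon$ to select $N$ with $\|x_{n+1}-x_n\|<\varepsilon-\varphi(\varepsilon)$ for all $n\geq N$, and then prove by induction on $k$ that $\|x_{N+k}-x_N\|<\varepsilon$; the inductive step splits the displacement as $\|x_{N+k+1}-x_N\|\leq\|x_{N+1}-x_N\|+\varphi(\|x_{N+k}-x_N\|)$ and uses monotonicity of $\varphi$ to bound the second term by $\varphi(\varepsilon)$. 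Completeness of $X$ then supplies the limit, and the $\varphi$-contraction inequality (which makes $T_\lambda$ continuous) identifies it as $p$.
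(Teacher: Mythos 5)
Your proof is correct and follows exactly the approach intended by the paper (which, being a survey, only cites the proof from \cite{Ber22a}): choose $\lambda=\frac{1}{b+1}$ so that $T_\lambda$ becomes an ordinary $\varphi$-contraction, then invoke the Matkowski-type fixed point theorem for comparison functions and transfer the conclusions back via $Fix\,(T)=Fix\,(T_\lambda)$. The details you supply, including the Cauchy-sequence induction for the $\varphi$-contraction convergence, are the standard and correct ones.
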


\begin{remark}
\indent

An enriched $(b,\varphi)$-contraction with $b=0$ is a usual $\varphi$-contraction, a concept that was studied previously in Berinde \cite{Ber97},  \cite{Ber03}, \cite{Ber04} and many other papers.

\end{remark}

Now, let us consider the auxiliary functions $\psi:\mathbb{R}_+\rightarrow [0,1)$ satisfying the following property:
\smallskip

$(g)$ If $\{t_n\}\subset \mathbb{R}_+$ and $\psi(t_n)\rightarrow 1$ as $n\rightarrow \infty$, then $t_n\rightarrow 0$ as $n\rightarrow \infty$.

\smallskip

Let  $\mathcal{P}$ denote the set of all auxiliary functions $\psi$ satisfying condition $(g)$ above. It is easy to check that $\mathcal{P}\neq \emptyset$, as the function $\psi(t)=\exp(-t)$, for $t\geq 0$, belongs to $\mathcal{P}$.

The next result is a very general fixed point theorem, that includes many other fixed point results as particular cases, see Berinde and P\u acurar  \cite{BPac21c}.

\begin{theorem}[\cite{BPac21c}]\label{th5-phi}
Let $(X,\|\cdot\|)$ be a Banach space and let $T:X\rightarrow X$ be an enriched $\psi$-contraction, i.e., a mapping for which there exists a function $\psi\in\mathcal{P}$ such that
\begin{equation} \label{ger}
\|b(x-y)+Tx-Ty\|\leq (b+1)\psi (\|x-y\|) \|x-y\|,\forall x,y \in X.
\end{equation}
Then,  

$(i)$ $Fix\,(T)=\{p\}$, for some $p\in X$.

$(ii)$ The sequence $\{x_n\}^\infty_{n=0}$ obtained from the iterative process
\begin{equation} \label{eq3w}
x_{n+1}=(1-\lambda)x_n+\lambda T x_n,\,n\geq 0,
\end{equation}
and $x_0\in X$ arbitrary, converges strongly to $p$.
\end{theorem}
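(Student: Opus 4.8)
The plan is to reduce the enriched setting to an ordinary (non-enriched) $\psi$-contraction by passing to the averaged operator, and then to run a Geraghty-type argument for the Picard iteration of that operator.

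First I would set $\lambda:=\dfrac{1}{b+1}$, which lies in $(0,1]$ since $b\in[0,\infty)$, and consider the averaged perturbation $T_\lambda=(1-\lambda)I+\lambda T$. A direct computation gives, for all $x,y\in X$,
\[
\frac{1}{\lambda}\bigl(T_\lambda x-T_\lambda y\bigr)=\frac{1-\lambda}{\lambda}(x-y)+(Tx-Ty)=b(x-y)+Tx-Ty,
\]
so that $\|T_\lambda x-T_\lambda y\|=\lambda\,\|b(x-y)+Tx-Ty\|$. Since $b+1=1/\lambda$, the enriched condition \eqref{ger} becomes exactly
\[
\|T_\lambda x-T_\lambda y\|\leq \psi(\|x-y\|)\,\|x-y\|,\quad \forall\, x,y\in X,
\]
i.e.\ $T_\lambda$ is an ordinary $\psi$-contraction with the same $\psi\in\mathcal{P}$. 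Because \eqref{b1} yields $Fix\,(T)=Fix\,(T_\lambda)$ and the iteration \eqref{eq3w} is precisely the Picard iteration $x_{n+1}=T_\lambda x_n$, both conclusions $(i)$ and $(ii)$ will follow once I prove that the $\psi$-contraction $S:=T_\lambda$ on the Banach space $X$ has a unique fixed point to which its Picard iterates converge.

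To establish this, I would write $x_{n+1}=Sx_n$ and $d_n:=\|x_{n+1}-x_n\|$. From $d_n\leq \psi(d_{n-1})\,d_{n-1}$ and $\psi<1$, the sequence $\{d_n\}$ is nonincreasing, hence converges to some $d^\ast\geq 0$. If $d^\ast>0$, then $d_n/d_{n-1}\to 1$, which forces $\psi(d_{n-1})\to 1$, and property $(g)$ yields $d_{n-1}\to 0$, contradicting $d^\ast>0$; hence $d_n\to 0$. The main step is to show $\{x_n\}$ is Cauchy: assuming it is not, for a fixed $\eps>0$ I would select indices $m_k>n_k\geq k$, with $m_k$ minimal, such that $\|x_{m_k}-x_{n_k}\|\geq\eps$ while $\|x_{m_k-1}-x_{n_k}\|<\eps$. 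Using $d_n\to 0$ and the triangle inequality, both $\|x_{m_k}-x_{n_k}\|\to\eps$ and $r_k:=\|x_{m_k-1}-x_{n_k-1}\|\to\eps$. The contraction inequality gives $\|x_{m_k}-x_{n_k}\|\leq\psi(r_k)\,r_k$, whence $\psi(r_k)\to 1$, and property $(g)$ forces $r_k\to 0$, contradicting $r_k\to\eps$. Thus $\{x_n\}$ is Cauchy and, by completeness of $X$, converges to some $p$.

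Finally I would verify that $p$ is the unique fixed point. Passing to the limit in $\|Sx_n-Sp\|\leq\psi(\|x_n-p\|)\|x_n-p\|\leq\|x_n-p\|\to 0$ gives $Sx_n\to Sp$; since $Sx_n=x_{n+1}\to p$, we obtain $Sp=p$. If also $Sq=q$ with $q\neq p$, then $\|p-q\|=\|Sp-Sq\|\leq\psi(\|p-q\|)\|p-q\|<\|p-q\|$, a contradiction, so the fixed point is unique. I expect the Cauchy step to be the main obstacle, being the only place where property $(g)$ must be combined with a careful extraction of subsequences; the reduction to $T_\lambda$ and the remaining verifications are routine.
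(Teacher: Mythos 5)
Your proposal is correct and follows essentially the same route as the cited source: the survey itself states Theorem \ref{th5-phi} without proof, but the reduction via $\lambda=\frac{1}{b+1}$ to the averaged operator $T_\lambda$, which turns \eqref{ger} into an ordinary Geraghty-type $\psi$-contraction condition whose Picard iteration is exactly \eqref{eq3w}, is precisely the enrichment technique this paper is built around and the way the result is proved in \cite{BPac21c}. Your Cauchy argument using property $(g)$ is the standard and correct completion of that reduction.
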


\subsection{Cyclic enriched $\varphi$-contractions in Banach spaces}
\indent

The class of enriched $\varphi$-contractions has been extended further to {\em cyclic enriched $\varphi$-contractions} in Berinde et al. \cite{Ber22a}. To present it, we need the following prerequisites, see Rus \cite{Rus05}.

Let $X$ be a nonempty set, $m$ a positive integer and $T:X\rightarrow X$ an operator. By definition, $\bigcup_{i=1}^{m} X_i$ is a {\em cyclic representation of $X$ with respect to $T$} if 

$(i)$ $X_i\neq \emptyset$, $i=1,2,...,m$; 

$(ii)$ $T(X_1)\subset X_2$,\dots, $T(X_{m-1})\subset X_{m}$, $T(X_m)\subset X_1$.
\smallskip

Let $(X,d)$ be a metric space, $m$ a positive integer, $A_1,\dots,A_m$ nonempty and closed subsets of $X$ and $Y=\bigcup_{i=1}^{m} A_i$. An operator $T:X\rightarrow X$ is called a {\em cyclic $\varphi$-contraction} if 

$(a)$  $\bigcup_{i=1}^{m} A_i$ is a cyclic representation of $Y$ with respect to $T$;

$(b)$ there exists a comparison function $\varphi:\mathbb{R}_{+} \rightarrow \mathbb{R}_{+}$ such that 
\begin{equation}\label {c3}
d(Tx,Ty)\leq \varphi(d(x,y)), 
\end{equation}
for any $x\in A_i$, $y\in A_{i+1}$, $i=1,2,\dots,m$, where $A_{m+1}=A_1$.

\begin{definition}\label{def7}
Consider a linear normed space $(X,\|\cdot\|)$, $T:X\rightarrow X$ be a self mapping and let $\bigcup_{i=1}^{m} A_i$ be a cyclic representation of $X$ with respect to $T$. If one can find a constant  $b\in[0,+\infty)$ and a comparison function $\varphi$ such that
\begin{equation} \label{phi}
\|b(x-y)+Tx-Ty\|\leq (b+1)\varphi (\|x-y\|),\forall x\in A_i\textnormal{ and } \forall y \in A_{i+1}, 
\end{equation}
for $i=1,2,\dots,m$, where $A_{m+1}=A_1$, then $T$ is said to be a {\it cyclic enriched $\varphi$-contraction}.
\end{definition}

\begin{example}\label{ex7}
\indent

1) Any cyclic $\varphi$-contraction is a cyclic enriched $\varphi$-contraction (with $b=0$);

2) Any enriched contraction is a cyclic enriched $\varphi$-contraction (with $m=1$).
\end{example}

A comparison function $\varphi$ is said to be a (c)-comparison function (see \cite{Ber97}) if there exist $k_0\in \mathbb{N}$, $\delta \in (0,1)$ and a convergent series of nonnegative terms $\sum\limits_{k=1}^{\infty} v_k$ such that
\begin{equation}\label{raport}
\varphi^{k+1} (t)\leq \delta \varphi^{k} (t)+v_k,\,k\geq k_0,\,t\in \mathbb{R}_{+}.
\end{equation}

It is known (see for example Lemma 1.1 \cite{PacR10}) that if $\varphi :\mathbb{R}_{+}\rightarrow \mathbb{R}_{+}$ is a (c)-comparison function, then $s :\mathbb{R}_{+}\rightarrow \mathbb{R}_{+}$ defined by
\begin{equation} \label{suma}
s(t)=\sum_{k=1}^{\infty} \varphi^{k} (t),\,t\in \mathbb{R}_{+}, 
\end{equation}
is increasing and continuous at $0$.

\begin{theorem} [\cite{Ber22a}]\label{th8-Cyclic}
Let $(X,\|\cdot\|)$ be a Banach space, $m$ a positive integer, $A_1,\dots,A_m$ nonempty and closed subsets of $X$, $Y=\bigcup_{i=1}^{m} A_i$ and $T:X\rightarrow X$ a cyclic enriched $\varphi$-contraction with $\varphi$ a (c)-comparison function. Then

$(i)$ $T$ has a unique fixed point $p\in \bigcap_{i=1}^{m} A_i$;

$(ii)$ there exists $\lambda\in (0,1]$ such that the iterative method
$\{x_n\}^\infty_{n=0}$, given by
\begin{equation} \label{eq3a}
x_{n+1}=(1-\lambda)x_n+\lambda T x_n,\,n\geq 0,
\end{equation}
and $x_0\in X$ arbitrary, converges strongly to $p$;

$(iii)$ the following estimates hold
$$
\|x_n-p\|\leq s\left(\varphi^n(\|x_0-x_1\|)\right), n\geq 1;
$$
$$
\|x_n-p\|\leq s\left(\varphi(\|x_n-x_{n+1}\|)\right), n\geq 1;
$$

$(iv)$ for any $x\in Y$:
$$
\|x_n-p\|\leq s(\lambda \|x-Tx\|),
$$
where $s$ is defined by \eqref{suma}.
\end{theorem}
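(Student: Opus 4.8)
The plan is to reduce the statement to the known fixed point theorem for cyclic $\varphi$-contractions (Pacurar--Rus \cite{PacR10}) by passing from $T$ to its averaged perturbation. Put $\lambda=\dfrac{1}{b+1}\in(0,1]$, so that $1-\lambda=\dfrac{b}{b+1}$, and set $T_\lambda=(1-\lambda)I+\lambda T$. Since
$$T_\lambda x-T_\lambda y=(1-\lambda)(x-y)+\lambda(Tx-Ty)=\frac{1}{b+1}\big(b(x-y)+Tx-Ty\big),$$
dividing the defining inequality \eqref{phi} by $b+1$ gives
$$\|T_\lambda x-T_\lambda y\|\leq \varphi(\|x-y\|),\qquad x\in A_i,\ y\in A_{i+1},\ i=1,\dots,m,\ A_{m+1}=A_1,$$
so $T_\lambda$ satisfies the metric contraction condition \eqref{c3} with the same comparison function $\varphi$.

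First I would check that $\bigcup_{i=1}^m A_i$ is also a cyclic representation of $Y$ with respect to $T_\lambda$, i.e. that $T_\lambda(A_i)\subset A_{i+1}$ for each $i$. This is the one point that is not inherited for free: since $T_\lambda x$ is a convex combination of $x\in A_i$ and $Tx\in A_{i+1}$, the inclusion holds once the sets $A_i$ are convex, as is natural in this Banach-space setting. Granting this, $T_\lambda$ is a genuine cyclic $\varphi$-contraction in the sense of \eqref{c3}; moreover $\varphi$ is a (c)-comparison function, so the function $s$ in \eqref{suma} is well defined, increasing and continuous at $0$.

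Next I would invoke the cyclic $\varphi$-contraction theorem applied to $T_\lambda$: it yields a unique fixed point $p\in\bigcap_{i=1}^m A_i$ of $T_\lambda$, convergence to $p$ of the Picard iteration $x_{n+1}=T_\lambda x_n$ for every $x_0\in Y$, and the a priori and a posteriori estimates
$$\|x_n-p\|\leq s\big(\varphi^n(\|x_0-x_1\|)\big),\qquad \|x_n-p\|\leq s\big(\varphi(\|x_n-x_{n+1}\|)\big),\ n\geq 1.$$
Because the Picard iteration of $T_\lambda$ is exactly the Krasnoselskij iteration \eqref{eq3a} of $T$, and because $Fix(T)=Fix(T_\lambda)$ by \eqref{b1}, the conclusions $(i)$, $(ii)$ and both estimates in $(iii)$ follow immediately. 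For $(iv)$ I would use that for any $x\in Y$ one has $\|x-T_\lambda x\|=\|\lambda x-\lambda Tx\|=\lambda\|x-Tx\|$, so the estimate $\|x_n-p\|\leq s(\|x-T_\lambda x\|)$ becomes $\|x_n-p\|\leq s(\lambda\|x-Tx\|)$.

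The main obstacle is the verification that $T_\lambda$ inherits the cyclic representation of $T$, which is where the convexity of the $A_i$ enters; once this is secured, all the quantitative content is packaged in the cited cyclic $\varphi$-contraction theorem. Its proof in turn rests on the (c)-comparison property, which forces the series $s(t)=\sum_{k\geq 1}\varphi^k(t)$ to converge and thereby shows that the iterates form a Cauchy sequence, and on the closedness of the $A_i$, which locates the limit $p$ in each $A_i$ and hence in $\bigcap_{i=1}^m A_i$.
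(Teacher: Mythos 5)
Your overall strategy---pass to the averaged operator $T_\lambda$ with $\lambda=\frac{1}{b+1}$, observe that \eqref{phi} becomes $\|T_\lambda x-T_\lambda y\|\leq\varphi(\|x-y\|)$ for $x\in A_i$, $y\in A_{i+1}$, and then quote the P\u acurar--Rus cyclic $\varphi$-contraction theorem---is certainly the intended route (the survey itself gives no proof, only the citation to \cite{Ber22a}, so the comparison here is against the standard reduction used throughout Section \ref{s3}). The algebra $T_\lambda x-T_\lambda y=\frac{1}{b+1}\bigl(b(x-y)+Tx-Ty\bigr)$ and the translation of part $(iv)$ via $\|x-T_\lambda x\|=\lambda\|x-Tx\|$ are both correct.

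The problem is the step you yourself flag as the main obstacle: that $\bigcup_{i=1}^m A_i$ remains a cyclic representation with respect to $T_\lambda$. Your proposed repair---convexity of the $A_i$---does not work. The point $T_\lambda x=(1-\lambda)x+\lambda Tx$ is a convex combination of $x\in A_i$ and $Tx\in A_{i+1}$, i.e.\ a point on a segment joining $A_i$ to $A_{i+1}$; convexity of $A_{i+1}$ only closes $A_{i+1}$ under combinations of its \emph{own} points. Concretely, in $X=\mathbb{R}$ with $A_1=[-2,-1]$, $A_2=[1,2]$ (closed and convex), $x=-1$, $Tx=1$, $\lambda=\frac{1}{2}$, one gets $T_\lambda x=0\notin A_2$. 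So $T_\lambda(A_i)\subset A_{i+1}$ fails in general, the cited theorem cannot be applied to $T_\lambda$ as a cyclic $\varphi$-contraction in the sense of \eqref{c3}, and conclusions $(i)$--$(iv)$ do not follow as written; moreover, convexity of the $A_i$ is not a hypothesis of Theorem \ref{th8-Cyclic}, so even if it sufficed you would be proving a different statement. A viable repair must avoid this inclusion altogether: for instance, once a fixed point $p\in\bigcap_{i=1}^m A_i$ is known to exist, every pair $(x,p)$ with $x\in A_i$ is admissible in \eqref{phi} because $p\in A_{i+1}$, which gives $\|T_\lambda x-p\|\leq\varphi(\|x-p\|)$ and hence the convergence and the estimates directly; but the existence of $p$ then still requires a separate Cauchy-sequence argument (using the $(c)$-comparison property and the closedness of the $A_i$) that does not pass through ``$T_\lambda$ is itself a cyclic $\varphi$-contraction.'' As it stands, that existence step is the genuine gap in your proposal.
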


\begin{remark}
If $m=1$, then by Theorem \ref{th8-Cyclic} one obtains Theorem \ref{th5-phi} from the previous section.
\end{remark}

\subsection{Enriched contractions in convex metric spaces} %
\indent
 
 The notion of {\em enriched contraction}  in convex metric spaces has been introduced and studied in Berinde and P\u acurar \cite{BPac21c}. To introduce it we need the following
 
 \begin{definition}  \label{def0-CMS}
Let $(X,d)$ be a metric space. A continuous function $W: X\times X\times [0,1]\rightarrow X$ is said to be a {\it convex structure} on $X$ if, for all $x,y\in X$ and any $\lambda\in [0,1]$,
\begin{equation} \label{eq1}
d(u,W(x,y;\lambda))\leq \lambda d(u,x)+(1-\lambda) d(u,y), \textnormal { for any } u\in X.
\end{equation}
A metric space $(X,d)$ endowed with a convex structure $W$ is called a {\it Takahashi convex metric space} and is usually denoted by $(X,d, W)$.
\end{definition}

\begin{remark}
Any linear normed space and each of its convex subsets are convex metric spaces, with the natural convex structure
\begin{equation}\label{convex}
W(x,y;\lambda)=\lambda x+(1-\lambda) y, x,y\in X; \lambda\in [0,1].
\end{equation}
but the reverse is not valid.
\end{remark}

 
 \begin{definition} [\cite{BPac21c}]\label{def1-CMS}
Let $(X, d, W)$ be a convex metric space. A mapping $T:X\rightarrow X$ is said to be an {\it enriched contraction} if there exist $c\in[0,1)$ and $\lambda\in [0,1)$ such that
\begin{equation} \label{eq3a}
d(W(x,Tx;\lambda), W(y,Ty;\lambda))\leq c d(x,y),\textnormal { for all } x,y \in X.
\end{equation}

To specify the parameters $c$ and $\lambda$ involved in \eqref{eq3a}, we   also call $T$ a $(\lambda,c)$-\linebreak enriched contraction.
\end{definition}

\begin{example}
\indent

Any $(0,c)$-enriched contraction is a usual Banach contraction and therefore any enriched contraction. 

\end{example}

The next result is a significant extension of Theorem \ref{th-B} from the case of a Banach space setting to that of an arbitrary complete convex metric space.

\begin{theorem} [\cite{BPac21c}] \label{th1-CMS}
Let $(X,d, W)$ be a complete convex metric space and let $T:X\rightarrow X$ be a $(\lambda,c)$-enriched contraction. Then,     

$(i)$ $Fix\,(T)=\{p\}$, for some $p\in X$.

$(ii)$ The sequence $\{x_n\}^\infty_{n=0}$ obtained from the iterative process
\begin{equation} \label{eq3b-CMS}
x_{n+1}=W(x_n,T x_n;\lambda),\,n\geq 0,
\end{equation}
converges to $p$, for any $x_0\in X$.

$(iii)$ The following estimate holds
\begin{equation} \label{3.2-1-CMS}
d(x_{n+i-1},p) \leq\frac{c^i}{1-c}\cdot d(x_n,
x_{n-1})\,  \quad n=1,2,\dots;\,i=1,2,\dots
\end{equation}
\end{theorem}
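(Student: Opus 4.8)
The plan is to reduce the whole statement to the classical Banach contraction mapping principle applied to a suitably averaged map. First I would introduce $T_\lambda:X\to X$ defined by $T_\lambda x:=W(x,Tx;\lambda)$, which is well defined precisely because the convex structure $W$ takes its values in $X$. With this notation the defining inequality of a $(\lambda,c)$-enriched contraction (Definition \ref{def1-CMS}) reads exactly
$$
d(T_\lambda x,T_\lambda y)\le c\,d(x,y),\quad\textnormal{for all }x,y\in X,
$$
so that $T_\lambda$ is an ordinary $c$-contraction on the complete metric space $(X,d)$. Moreover, the iterative process in part $(ii)$ is nothing but the Picard iteration $x_{n+1}=T_\lambda x_n$ of $T_\lambda$, and the bound in part $(iii)$ is the standard a posteriori error estimate for that Picard iteration. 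Hence, once the fixed point sets are matched up, parts $(ii)$ and $(iii)$ will follow at once from the Banach fixed point theorem.

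The crux of the argument, and the only place where the convex structure is genuinely exploited, is the identity $Fix\,(T)=Fix\,(T_\lambda)$. For the inclusion $Fix\,(T)\subseteq Fix\,(T_\lambda)$ I would take $p$ with $Tp=p$; then $T_\lambda p=W(p,p;\lambda)$, and feeding $u=x=y=p$ into the convex structure axiom \eqref{eq1} gives $d(p,W(p,p;\lambda))\le 0$, whence $T_\lambda p=p$. For the reverse inclusion I would take $p$ with $T_\lambda p=W(p,Tp;\lambda)=p$ and apply \eqref{eq1} with the auxiliary choice $u=Tp$, $x=p$, $y=Tp$, which yields
$$
d(Tp,p)=d\bigl(Tp,W(p,Tp;\lambda)\bigr)\le \lambda\,d(Tp,p);
$$
since $\lambda<1$, this forces $d(Tp,p)=0$, i.e. $Tp=p$. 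Thus $Fix\,(T)=Fix\,(T_\lambda)$.

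With this identity established, the Banach contraction principle applied to $T_\lambda$ provides a unique fixed point $p$ of $T_\lambda$, which is therefore the unique fixed point of $T$, giving $(i)$; the convergence $x_n\to p$ of the Picard iterates of $T_\lambda$ from any starting point $x_0$ gives $(ii)$; and $(iii)$ follows by the usual telescoping estimate, chaining the triangle inequality with the geometric decay $d(x_{k+1},x_k)\le c^{\,k-n+1}\,d(x_n,x_{n-1})$ and summing the resulting geometric series to obtain the factor $\dfrac{c^i}{1-c}$.

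I expect the main obstacle to be exactly the verification of $Fix\,(T)=Fix\,(T_\lambda)$ in the abstract convex metric space, since one cannot cancel terms algebraically as in a normed space; the decisive move is to substitute the right auxiliary point $u$ into the convex structure inequality \eqref{eq1} so as to recover the cancellation by a metric argument. Everything else in the proof is essentially a transcription of the Banach fixed point theorem and its error estimates.
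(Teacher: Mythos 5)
Your proposal is correct and follows exactly the route the paper (and the cited source \cite{BPac21c}) intends: set $T_\lambda x=W(x,Tx;\lambda)$, observe that condition \eqref{eq3a} says precisely that $T_\lambda$ is a $c$-contraction, verify $Fix\,(T)=Fix\,(T_\lambda)$ via the convex-structure inequality \eqref{eq1}, and invoke the Banach contraction principle together with its standard a posteriori estimate. The two substitutions into \eqref{eq1} that give the equality of fixed point sets are exactly the right metric replacement for the algebraic cancellation available in normed spaces, so there is nothing to add.
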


\subsection{Enriched Pre\v si\' c contractions}

\indent
 
 The notion of {\em enriched Pre\v si\' c contraction} has been introduced and studied in P\u acurar \cite{Pac23}.
 
 \begin{definition} [\cite{Pac23}] 
	Let $(X,+,\cdot)$ be a linear vector space, $k$ a positive integer and $T:X^k\to X$ an operator. For $\lambda_0, \lambda_1, \dots, \lambda_k\geq0$, with $\underset{i=0}{\overset{k}{\sum}}\lambda_i=1$ and $\lambda_k\neq0$, the operator $T_{\lambda}:X^k\to X$, defined by
	
	\begin{equation}\label{Def_Tlambda}
		T_{\lambda}(x_0,x_1,\dots,x_{k-1})=\lambda_0x_0+\lambda_1x_1+\dots+\lambda_{k-1}x_{k-1}+\lambda_kT(x_0,x_1,\dots,x_{k-1})
	\end{equation}
	will be called the averaged mapping corresponding to T.
\end{definition}

\begin{remark}
	One can easily see that, for $k=1$, the above definition reduces to $T_{\lambda}(x_0)=\lambda_0 x_0 + \lambda_1 T(x_0)$, for $x_0\in X$, where $\lambda_0+\lambda_1=1$, that is, the averaged mapping $T_{\lambda}:X\to X$ extensively used in the previous sections.
\end{remark}

\begin{remark}\label{Rem_T.Tlambda}
	As in the case of the averaged mapping corresponding to an operator defined on $X$, it is not difficult to show that $x^*\in X$ is a fixed point of $T^k:X\to X$ if and only if it is a fixed point of the corresponding $T_{\lambda}:X^k\to X$, for some $\lambda_i\geq0$, $i=0,1,\dots,k$, with $\underset{i=0}{\overset{k}{\sum}}\lambda_i=1$ and $\lambda_k\neq0$.
	
	Indeed, supposing $x^*\in X$ such that $T_{\lambda}(x^*,x^*,\dots,x^*)=x^*$, it follows that
	\[\lambda_0x^*+\lambda_1x^*+\dots+\lambda_{k-1}x^*+\lambda_kT(x^*,x^*,\dots,x^*)=x^*,\]
	so
	\[(1-\lambda_k)x^*+\lambda_kT(x^*,x^*,\dots,x^*)=x^*.\]
	Since $\lambda_k\neq0$, it follows immediately that $T(x^*,x^*,\dots,x^*)=x^*.$
	The inverse is obvious.
\end{remark}


\begin{definition} [\cite{Pac23}] 
	Let $(X,\left\| \cdot \right\|)$ be a linear normed space and $k$ a positive integer. A mapping $T:X^k\to X$ is said to be an enriched Pre\v si\'c operator if there exist $b_i\geq0,i=0,1,\dots,k-1$ and $\theta_i\geq0,i=0,1,\dots,k-1$ with $\overset{k-1}{\underset{i=0}{\sum}}(\theta_i-b_i)<1$ such that:
	\[
	\left\|\overset{k-1}{\underset{i=0}{\sum}}b_i(x_i-x_{i+1})+T(x_0,x_1,\dots,x_{k-1})-T(x_1,x_2,\dots,x_k)\right\|\leq\overset{k-1}{\underset{i=0}{\sum}}\theta_i\left\|x_i-x_{i+1}\right\|,
	\]
	for all $x_0,x_1,\dots,x_k\in X$.
\end{definition}

\begin{remark}
\indent 

	1) For $k=1$ this reduces to the definition of an enriched Banach contraction;

	2) If $b_0=b_1=\dots=b_{k-1}=0$ in the above definition, then we obtain the definition of a Pre\v si\' c operator, see P\u acurar \cite{Pac23}.
\end{remark}

The next results states that an enriched Pre\v si\'c operator posseses a unique fixed point, which can be obtained by means of some appropriate iterative methods. 

\begin{theorem} [\cite{Pac23}] \label{Th_EPresic}
	Let  $(X,\left\| \cdot \right\|)$ be a Banach space, $k$ a positive integer and $T:X^k\to X$ an enriched Pre\v si\' c operator with constants  $b_i,\theta_i, i=0,1,\ldots, k-1$. Then:
	\begin{itemize}
		\item[1)] $T$ has a unique fixed point $x^*\in X$ such that $T(x^*,x^*,\dots,x^*)$;
		\item[2)] There exists $a\in(0,1]$ such that the iterative method $\{y_n\}_{n\geq0}$ given by
		\[
		y_n=(1-a)y_{n-1}+aT(y_{n-1},y_{n-1},\dots,y_{n-1}),n\geq1,
		\]
		converges to the unique fixed point $x^*$, starting from any initial point $y_0\in X$.
		
		\item[3)] There exist $\lambda_0, \lambda_1, \dots, \lambda_k\geq0$ with $\underset{i=0}{\overset{k}{\sum}}\lambda_i=1$ and $\lambda_k\neq0$ such that the iterative method $\{x_n\}_{n\geq0}$ given by
		\[
		x_n=\lambda_0x_{n-k}+\lambda_1x_{n-k+1}+\dots+\lambda_{k-1}x_{n-1}+\lambda_kT(x_{n-k},x_{n-k+1},\dots,x_{n-1})
		\]
		or simply
		\[
		x_n=T_{\lambda}(x_{n-k},x_{n-k+1},\dots,x_{n-1}), n\geq1,
		\]
		converges to $x^*$, for any initial points $x_0,x_1,\dots,x_{k-1}\in X$.
		
	\end{itemize}
\end{theorem}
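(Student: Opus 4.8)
The plan is to reduce the enriched Prešić theorem to the classical Prešić contraction theorem by choosing the averaging coefficients so that the averaged operator $T_\lambda$ becomes an ordinary Prešić operator, and then to transport the conclusions back to $T$ via Remark \ref{Rem_T.Tlambda}. First I would look for the coefficients $\lambda_0,\dots,\lambda_k$ by matching $T_\lambda$ with the left-hand side of the enriched condition. Writing out the shift difference
$$T_\lambda(x_0,\dots,x_{k-1}) - T_\lambda(x_1,\dots,x_k) = \sum_{i=0}^{k-1}\lambda_i(x_i - x_{i+1}) + \lambda_k\bigl[T(x_0,\dots,x_{k-1}) - T(x_1,\dots,x_k)\bigr],$$
I see that it equals $\lambda_k$ times the quantity inside the norm of the enriched Prešić inequality precisely when $\lambda_i = \lambda_k b_i$ for $i=0,\dots,k-1$. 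Together with $\sum_{i=0}^{k}\lambda_i = 1$ this forces $\lambda_k = \bigl(1+\sum_{i=0}^{k-1} b_i\bigr)^{-1}$ and $\lambda_i = b_i\lambda_k$; all are nonnegative and $\lambda_k\neq 0$, so the choice is admissible.

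With this choice the enriched Prešić inequality immediately yields
$$\|T_\lambda(x_0,\dots,x_{k-1}) - T_\lambda(x_1,\dots,x_k)\| \le \sum_{i=0}^{k-1} q_i\,\|x_i-x_{i+1}\|, \qquad q_i := \frac{\theta_i}{1+\sum_{j=0}^{k-1} b_j},$$
so $T_\lambda$ is an ordinary Prešić operator, and the decisive summability condition $\sum_{i=0}^{k-1} q_i < 1$ is equivalent to $\sum_{i=0}^{k-1}(\theta_i-b_i) < 1$, which is exactly the hypothesis. I would then invoke the classical Prešić fixed point theorem for $T_\lambda$: it furnishes a unique $x^*$ with $T_\lambda(x^*,\dots,x^*)=x^*$ and the convergence of the iteration $x_n = T_\lambda(x_{n-k},\dots,x_{n-1})$ to $x^*$ from arbitrary starting points, which settles part (3) and, by Remark \ref{Rem_T.Tlambda} (whose content is that diagonal fixed points of $T_\lambda$ and fixed points of the diagonal of $T$ coincide since $\lambda_k\neq 0$), the existence and uniqueness in part (1).

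For part (2), I would set $a=\lambda_k$ and observe that the Krasnoselskij iteration is the Picard iteration $y_n = g(y_{n-1})$ of the diagonal map $g(y) := T_\lambda(y,\dots,y) = (1-\lambda_k)y + \lambda_k T(y,\dots,y)$, using $\sum_{i=0}^{k-1}\lambda_i = 1-\lambda_k$. The key step is to show that $g$ is a genuine contraction with constant $\sum_i q_i$. Since the Prešić inequality only controls differences of one-step shifts and not arbitrary coordinate changes, I would establish this by telescoping through shifts: inserting the block sequence $w_0=\dots=w_{k-1}=x$ and $w_k=\dots=w_{2k-1}=y$ and applying the Prešić estimate to the consecutive windows $u_j=(w_j,\dots,w_{j+k-1})$, for which exactly one coordinate difference $x-y$ survives, gives $\|g(x)-g(y)\|\le\bigl(\sum_i q_i\bigr)\|x-y\|$. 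Banach's contraction principle then produces the convergence of $\{y_n\}$ to the unique fixed point $x^*$, completing part (2).

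I expect this telescoping argument for the diagonal map to be the only delicate point; once the contraction constant of $g$ is matched to $\sum_i q_i$, everything else is bookkeeping with the coefficients $\lambda_i$ and direct appeals to the classical Prešić theorem and to Remark \ref{Rem_T.Tlambda}.
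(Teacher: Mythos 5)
Your argument is correct: the choice $\lambda_i=b_i\lambda_k$, $\lambda_k=\bigl(1+\sum_{i=0}^{k-1}b_i\bigr)^{-1}$ turns the enriched condition into a classical Pre\v si\'c contraction for $T_\lambda$ with constants $q_i=\theta_i/(1+\sum_j b_j)$ summing to less than $1$, the telescoping through one-step shifts correctly shows the diagonal map is a Banach contraction with the same constant, and Remark \ref{Rem_T.Tlambda} transports the fixed point back to $T$. This survey states the theorem without reproducing the proof from \cite{Pac23}, but your reduction via the averaged operator is exactly the mechanism the surrounding definitions and Remark \ref{Rem_T.Tlambda} are set up for, so it is essentially the intended argument.
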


\section{Enriched nonexpansive mappings} \label{s4}

Before proceeding with the exposure of the class of enriched nonexpansive mappings, we recall the following related concepts.

\subsection{Enriched nonexpansive mappings in Hilbert spaces}
\indent
 
 The notion of {\em enriched nonexpansive mapping} has been introduced and studied in Berinde  \cite{Ber19}.
\begin{definition}[\cite{Ber19}]\label{def0}
Let $(X,\|\cdot\|)$ be a linear normed space. A mapping $T:X\rightarrow X$ is said to be an {\it enriched nonexpansive mapping} if there exists $b\in[0, \infty)$  such that
\begin{equation} \label{eq3-NE}
\|b(x-y)+Tx-Ty\|\leq (b+1) \|x-y\|,\forall x,y \in X.
\end{equation}
To indicate the constant involved in \eqref{eq3-NE} we shall also call $T$ as a  $b$-{\it enriched nonexpansive mapping}. 
\end{definition}

It is important to note that inequality \eqref{eq3-NE} in Definition \ref{def0} was derived from \eqref{b2} by denoting $b=\dfrac{1}{\lambda}-1$.

Any nonexpansive mapping $T$ is an enriched nonexpansive mapping, i.e., it satisfies \eqref{eq3-NE} with $b=0$, but   the reverse is not true, as shown by the next example.

\begin{example}[\cite{Ber19}] \label{ex1-NE}
\indent

Let $X=\left[\dfrac{1}{2},2\right]$ be endowed with the usual norm and $T:X\rightarrow X$ be defined by $Tx=\dfrac{1}{x}$, for all $x\in \left[\dfrac{1}{2},2\right]$. Then 

(i) $T$ is Lipschitz continuous with Lipschitz constant $L=4$ (and so $T$ is not nonexpansive);

(ii) $T$ is  a $3/2$-enriched nonexpansive mapping.


\end{example}

For the sake of completeness, we recall that a mapping $T:C\rightarrow H$, where $C$ is a bounded closed convex
subset of a Hilbert space $H$, is called \textit{demicompact} 
if it has the property that whenever $\{u_{n}\}$ is a bounded sequence in $H$
and $\{Tu_{n}-u_{n}\}$ is strongly convergent, then there exists a
subsequence $\{u_{n_{k}}\}$ of $\{u_{n}\}$ which is strongly convergent.

The next result states that any enriched nonexpansive mapping which is also demicompact has a nonempty convex fixed point set and that one can approximate its fixed points by means of a Krasnoselskij type iterative scheme.

\begin{theorem} [\cite{Ber19}] \label{th1}
Let $C$ be a bounded closed convex
subset of a Hilbert space $H$ and  $T:C\rightarrow C$ be a $b$-enriched nonexpansive and demicompact mapping. Then the set $Fix\,(T)$ of fixed points of $T$ is a nonempty convex set and there exists $\lambda\in \left(0,1\right)$ such that, for any given $x_0\in C$,  the Krasnoselskij iteration $\{x_n\}_{n=0}^{\infty}$ given by
\begin{equation} \label{eq4}
x_{n+1}=(1-\lambda) x_n+\lambda Tx_n,\,n\geq 0,
\end{equation}
converges strongly to a fixed point of $T$.
\end{theorem}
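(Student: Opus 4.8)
The plan is to reduce the statement to the classical Krasnoselskij theory for nonexpansive self-maps of bounded closed convex subsets of a Hilbert space, via the averaged perturbation. First I would set $\mu_0:=\frac{1}{b+1}$ and define $S:=T_{\mu_0}=(1-\mu_0)I+\mu_0 T$. A direct computation shows that the $b$-enriched nonexpansiveness \eqref{eq3-NE} is exactly equivalent to $S$ being nonexpansive: since $Sx-Sy=\mu_0\bigl(b(x-y)+Tx-Ty\bigr)$, inequality \eqref{eq3-NE} yields $\|Sx-Sy\|\leq \mu_0(b+1)\|x-y\|=\|x-y\|$. Because $C$ is convex and $T(C)\subset C$, the map $S$ is a nonexpansive self-map of $C$, and by the fixed-point invariance \eqref{b1} (trivially also when $b=0$, where $S=T$) we have $Fix(T)=Fix(S)$.

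With this reduction the first two conclusions are immediate from standard results. Since $S$ is a nonexpansive self-map of the bounded closed convex set $C$ in a Hilbert space, Browder's fixed point theorem guarantees $Fix(S)\neq\emptyset$, hence $Fix(T)\neq\emptyset$; and the fixed point set of a nonexpansive map is always convex, so $Fix(T)=Fix(S)$ is convex. I would also record here that $T$ is Lipschitz, since \eqref{eq3-NE} gives $\|Tx-Ty\|\leq(2b+1)\|x-y\|$, hence $T$ is continuous, which is needed at the end.

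For the convergence I would choose any $\lambda\in(0,\mu_0)$ and put $\mu:=\lambda/\mu_0\in(0,1)$, so that $T_\lambda=(1-\mu)I+\mu S$ is an averaged mapping and the Krasnoselskij iteration \eqref{eq4} becomes precisely the averaged iteration $x_{n+1}=(1-\mu)x_n+\mu Sx_n$ for the nonexpansive map $S$; note $\lambda\in(0,\mu_0)\subset(0,1)$ as required. The technical heart is the asymptotic regularity $\|Sx_n-x_n\|\to 0$. In a Hilbert space this follows from the parallelogram identity: for any $p\in Fix(S)$,
\begin{equation}\label{plan-ar}
\|x_{n+1}-p\|^2\leq \|x_n-p\|^2-\mu(1-\mu)\|Sx_n-x_n\|^2,
\end{equation}
so $\{\|x_n-p\|\}$ is nonincreasing and the series $\sum_n\mu(1-\mu)\|Sx_n-x_n\|^2$ telescopes to a finite sum, forcing $\|Sx_n-x_n\|\to 0$. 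Consequently $\|Tx_n-x_n\|=(b+1)\|Sx_n-x_n\|\to 0$ as well.

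The final step uses demicompactness to upgrade to strong convergence. Since $\{x_n\}\subset C$ is bounded and $\{Tx_n-x_n\}$ converges (to $0$), the demicompactness of $T$ produces a subsequence $x_{n_k}\to x^\ast$ strongly. Continuity of $T$ then yields $Tx^\ast-x^\ast=\lim_k(Tx_{n_k}-x_{n_k})=0$, i.e.\ $x^\ast\in Fix(T)=Fix(S)$. Finally, because $T_\lambda$ is nonexpansive and fixes $x^\ast$ (by \eqref{b1} applied to $\lambda$), estimate \eqref{plan-ar} with $p=x^\ast$ shows $\{\|x_n-x^\ast\|\}$ is nonincreasing; since it has a subsequence tending to $0$, the whole sequence converges strongly to $x^\ast$. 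The main obstacle is the asymptotic regularity estimate \eqref{plan-ar}, although in the Hilbert-space setting it is produced cleanly by the parallelogram law; a secondary subtlety is that demicompactness is assumed for $T$ rather than for $S$, which is harmless because $Tx_n-x_n$ and $Sx_n-x_n$ differ only by the constant factor $b+1$.
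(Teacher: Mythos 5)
Your proposal is correct and follows the same route as the original argument in Berinde \cite{Ber19}, which this survey only cites: pass to the averaged map $T_{1/(b+1)}$, observe that the $b$-enriched nonexpansiveness of $T$ is exactly the nonexpansiveness of this map (precisely the substitution $b=\frac{1}{\lambda}-1$ noted after Definition \ref{def0}), and then run the classical Browder--Petryshyn scheme (Browder's fixed point theorem, the parallelogram-law asymptotic regularity estimate, and demicompactness to upgrade to strong convergence). The only difference is that you re-derive the Browder--Petryshyn convergence theorem rather than quoting it, which is harmless.
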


If we denote by $\mathcal{ENE}$ the class of enriched nonexpansive mappings, then by the previous example it follows that we have the following strict inclusion relationship
$$
\mathcal{NE} \subsetneq \mathcal{ENE}
$$ 
and, by the diagram in Figure 1, we also deduce that 
$$
\mathcal{NE} \subsetneq \mathcal{SPC}.
$$ 
It was then natural to raise the following 
\medskip

{\bf Problem.} Find the relationship between the classes $\mathcal{ENE}$ and $\mathcal{SPC}$.
\medskip

For the case of enriched nonexpansive mappings defined on a real Hilbert space, the answer is given by the next theorem which is a reformulated version of Theorem 8 in Berinde and P\u acurar \cite{BPac21d}.

\begin{theorem} \label{p5}
In a real Hilbert space, $\mathcal{ENE}=\mathcal{SPC}$.
\end{theorem}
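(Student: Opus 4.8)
The plan is to exploit the inner-product structure of a Hilbert space to rewrite each of the two defining inequalities as an equivalent quadratic inequality in the pair of vectors $u:=x-y$ and $v:=Tx-Ty$, and then to match the parameters $b$ and $k$ by an explicit bijection. The whole argument is genuinely Hilbertian, since it rests entirely on expanding a squared norm via the scalar product.

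First I would reformulate the enriched nonexpansive condition \eqref{eq3-NE}. As both sides of $\|b(x-y)+Tx-Ty\|\le (b+1)\|x-y\|$ are nonnegative, squaring is reversible, and in a Hilbert space one has $\|bu+v\|^2=b^2\|u\|^2+2b\langle u,v\rangle+\|v\|^2$ while $(b+1)^2\|u\|^2=(b^2+2b+1)\|u\|^2$. Cancelling $b^2\|u\|^2$, the condition becomes
\[
\|v\|^2+2b\langle u,v\rangle\le (2b+1)\|u\|^2,\qquad\forall x,y .
\]
Next I would expand the strictly pseudocontractive condition \eqref{strict} the same way. Since $x-y-Tx+Ty=u-v$ and $\|u-v\|^2=\|u\|^2-2\langle u,v\rangle+\|v\|^2$, inequality \eqref{strict} is equivalent to
\[
(1-k)\|v\|^2+2k\langle u,v\rangle\le (1+k)\|u\|^2,\qquad\forall x,y .
\]

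The decisive step is then purely algebraic: I would set $b=\dfrac{k}{1-k}$, equivalently $k=\dfrac{b}{b+1}$. With this choice $2b+1=\dfrac{1+k}{1-k}$ and $2b=\dfrac{2k}{1-k}$, so dividing the pseudocontractive inequality by $1-k>0$ produces exactly the enriched inequality, while multiplying the enriched inequality by $1-k>0$ recovers the pseudocontractive one. Hence $T$ is $b$-enriched nonexpansive if and only if $T$ is $k$-strictly pseudocontractive for the paired constants. Finally I would note that $k\mapsto k/(1-k)$ is an increasing bijection of $[0,1)$ onto $[0,\infty)$, so the existence of an admissible $b\in[0,\infty)$ is equivalent to the existence of an admissible $k\in[0,1)$; this yields $\mathcal{ENE}=\mathcal{SPC}$.

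The computation itself is routine, so there is no serious technical obstacle; the points that require care are rather conceptual. One must check that both defining inequalities are between nonnegative quantities, so that passing to squares loses no information, and that the parameter correspondence is a true bijection across the \emph{full} admissible ranges, so that no mapping is dropped in either inclusion. Most importantly, the entire equivalence hinges on the identity $\|bu+v\|^2=b^2\|u\|^2+2b\langle u,v\rangle+\|v\|^2$, which is exactly the tool that is unavailable in a general Banach space. This is precisely why the coincidence of the two classes is a Hilbert-space phenomenon and not a general normed-space one.
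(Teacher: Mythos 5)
Your proposal is correct and follows essentially the same route as the paper's own proof: both expand the squared norms via the inner product and use the substitution $b=\dfrac{k}{1-k}$ (equivalently $k=\dfrac{b}{b+1}$) to convert the strict pseudocontractivity inequality into the enriched nonexpansiveness inequality and back. Your version merely reorganizes the algebra (cancelling $b^2\|u\|^2$ rather than completing the square) and is slightly more explicit about the reversibility of squaring and the bijectivity of the parameter correspondence, which the paper compresses into ``the converse follows by running backwardly.''
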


\begin{proof}
Let $T\in \mathcal{SPC}$. Then  $T$ satisfies \eqref{strict} with some $k\in (0,1)$.
We have
\begin{equation}\label{b3}
\|x-y-(Tx-Ty)\|^2=\langle x-y-(Tx-Ty), x-y-(Tx-Ty)\rangle
\end{equation}
$$
=\|x-y\|^2-2\langle x-y, Tx-Ty\rangle+\|Tx-Ty\|^2
$$
and so \eqref{strict} is equivalent to
$$
\|Tx-Ty\|^2\leq \frac{1+k}{1-k}\cdot \|x-y\|^2-\frac{2k}{1-k}\cdot \langle x-y,Tx-Ty\rangle.
$$
By adding to both sides of the previous inequality the quantity
$$
\left(\frac{k}{1-k}\right)^2\cdot \|x-y\|^2+ \dfrac{2k}{1-k}\langle x-y,Tx-Ty\rangle,
$$
we deduce that \eqref{strict} is equivalent to
\begin{equation}\label{destept}
\left\|\frac{k}{1-k}(x-y)+Tx-Ty\right\|^2\leq \left(\left(\frac{k}{1-k}\right)^2+\frac{1+k}{1-k}\right) \|x-y\|^2.
\end{equation}
Now, by denoting $b=\dfrac{k}{1-k}>0$, 
it follows that inequality \eqref{destept} 
is equivalent to 
$$
\|b(x-y)+Tx-Ty\|\leq (b+1) \|x-y\|,\,\forall x,y \in C, 
$$
and this shows that $T\in \mathcal{NE}$. 

The converse follows by runing backwardly the previous implications.
\end{proof}

\subsection{Enriched nonexpansive mappings in Banach spaces}

\begin{remark}
The equality in Theorem \ref{p5} is no more valid if we work in a Banach space. 
The main reason is that in a Banach space we cannot derive the fundamental identity \eqref{b3} in the proof of Theorem \ref{p5}, as it is expressed by means of the inner product in the Hilbert space $H$. 
\end{remark}

Hence in a Banach space the class of enriched nonexpansive mappings and that of of strictly pseudocontractive mappings are independent and therefore the next result is an important generalization of several results in literature established for nonexpansive mappings, e.g., in Browder and Petryshyn \cite{BroP67}. 

\begin{theorem} [\cite{Ber20}] \label{th1a}
Let $C$ be a nonempty bounded closed convex
subset of a uniformly convex Banach space $X$ and let $T:C\rightarrow C$ be a $b$-enriched nonexpansive mapping. Suppose $T$ satisfies Condition I, i.e., 
there exists a nondecreasing function $f:[0,\infty)\rightarrow [0,\infty)$ with the properties $f(0)=0$ and $f(r)>r$, for $r>0$, such that
\begin{equation} \label{eq2}
\|x-Tx\|\geq f(d(x, Fix\,(T)), \forall x\in C,
\end{equation}
where
$$
d(x, Fix\,(T))=\inf \{\|x-z\|: z\in Fix\,(T)\}
$$
is the distance between the point $x$ and the set $Fix\,(T)$. 

Then $Fix\,(T)\neq \emptyset$ and, for any $\lambda\in \left(0,\frac{1}{b+1}\right)$ and for any given $x_0\in C$, the Krasnoselskij iteration $\{x_n\}_{n=0}^{\infty}$ given by
\begin{equation} \label{eq4}
x_{n+1}=(1-\lambda) x_n+\lambda Tx_n,\,n\geq 0,
\end{equation}
converges strongly to a fixed point of $T$.
\end{theorem}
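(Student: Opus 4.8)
The plan is to reduce the enriched problem to a standard fixed point problem for a genuinely nonexpansive mapping and then to invoke two classical theorems of metric fixed point theory. Put $\mu_0=\tfrac{1}{b+1}$ and consider the averaged perturbation $T_{\mu_0}=(1-\mu_0)I+\mu_0 T$, which sends $C$ into $C$ because $C$ is convex. The first step is the algebraic dictionary between the two formulations: since $(b+1)(1-\mu_0)=b$ and $(b+1)\mu_0=1$, one has $(b+1)(T_{\mu_0}x-T_{\mu_0}y)=b(x-y)+Tx-Ty$, hence $(b+1)\|T_{\mu_0}x-T_{\mu_0}y\|=\|b(x-y)+Tx-Ty\|$. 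Comparing this with \eqref{eq3-NE} shows that the $b$-enriched nonexpansiveness of $T$ is exactly equivalent to $\|T_{\mu_0}x-T_{\mu_0}y\|\le\|x-y\|$, i.e. $T_{\mu_0}$ is nonexpansive on $C$.

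To obtain $(i)$ I would use \eqref{b1}, which gives $Fix\,(T)=Fix\,(T_{\mu_0})$, together with the Browder--G\"ohde--Kirk fixed point theorem: a nonexpansive self-map of a nonempty bounded closed convex subset of a uniformly convex Banach space has a fixed point. Applied to $T_{\mu_0}$ this yields $Fix\,(T_{\mu_0})\neq\emptyset$, and therefore $Fix\,(T)\neq\emptyset$.

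For $(ii)$ I would first transfer Condition I from $T$ to $T_{\mu_0}$. Because $x-T_{\mu_0}x=\mu_0(x-Tx)$ and $Fix\,(T_{\mu_0})=Fix\,(T)$, inequality \eqref{eq2} gives $\|x-T_{\mu_0}x\|=\mu_0\|x-Tx\|\ge \mu_0 f\big(d(x,Fix\,(T_{\mu_0}))\big)$, so $T_{\mu_0}$ satisfies Condition I with the function $g=\mu_0 f$; this $g$ is again nondecreasing, vanishes at $0$ and is strictly positive for positive arguments, and it is precisely this positivity that the strong-convergence argument uses. Next I would recognize the Krasnoselskij iteration of $T$ as a constant-step Mann iteration of $T_{\mu_0}$: setting $t=\lambda(b+1)=\lambda/\mu_0\in(0,1)$ for $\lambda\in\big(0,\tfrac{1}{b+1}\big)$, a one-line expansion gives $(1-t)x_n+tT_{\mu_0}x_n=(1-\lambda)x_n+\lambda Tx_n=x_{n+1}$, so $\{x_n\}$ is exactly the Mann sequence for the nonexpansive map $T_{\mu_0}$ with coefficient $t$ bounded away from $0$ and $1$.

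With these reductions in place, the conclusion follows from the Senter--Dotson strong-convergence theorem: for a nonexpansive self-map of a closed convex subset of a uniformly convex Banach space with nonempty fixed point set and satisfying Condition I, the Mann iteration with coefficients in a compact subinterval of $(0,1)$ converges strongly to a fixed point. Applying it to $T_{\mu_0}$ gives strong convergence of $\{x_n\}$ to a point of $Fix\,(T_{\mu_0})=Fix\,(T)$, which is $(ii)$. I expect the main difficulty to be organizational rather than analytic: the genuinely hard input from uniform convexity is fully encapsulated in the two cited classical theorems, so the work is to verify the enriched/nonexpansive identity, to carry Condition I across the averaging without losing its essential positivity after scaling by $\mu_0$, and to re-read the Krasnoselskij scheme for $T$ as a constant-coefficient Mann scheme for $T_{\mu_0}$ so that Senter--Dotson applies verbatim.
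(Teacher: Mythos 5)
Your proposal is correct and follows essentially the same route as the proof in the cited source \cite{Ber20}: one passes to the averaged operator $T_{\mu_0}$ with $\mu_0=\tfrac{1}{b+1}$, observes that $b$-enriched nonexpansiveness of $T$ is exactly nonexpansiveness of $T_{\mu_0}$ with $Fix\,(T)=Fix\,(T_{\mu_0})$, gets nonemptiness from Browder--G\"ohde--Kirk, and then reads the Krasnoselskij scheme for $T$ as a constant-coefficient Mann scheme for $T_{\mu_0}$ so that the Senter--Dotson theorem applies. Your remark that only the positivity $g(r)>0$ of the transferred function $g=\mu_0 f$ is needed (rather than $g(r)>r$) correctly disposes of the one place where the scaling by $\mu_0$ could have seemed to cause trouble.
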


\section{Unsaturated and saturated classes of contractive mappings}\label{s5}

After a close examination of the fixed point results in the Sections \ref{s3} and \ref{s4}, it was noted that the technique of enriching a contractive type mapping $T$, by means of the averaged operator $T_\lambda$, cannot effectively enlarge all classes of contractive mappings. 

This observation suggested us a new interesting concept,  that of  {\it saturated} class of contractive mappings with respect to the averaged operator $T_\lambda$, a notion that has been introduced and studied in Berinde and P\u acurar \cite{BPac21d}.

\begin{definition}[\cite{BPac21d}] \label{bogat}
Let $(X,\|\cdot\|)$ be a linear normed space and let $\mathcal{C}$ be a subset of the family of all self mappings of $X$. A mapping $T:X\rightarrow X$ is said to be  $\mathcal{C}${\em -enriched} or {\em enriched with respect to} $\mathcal{C}$ if there exists $\lambda\in (0,1]$ such  $T_\lambda\in \mathcal{C}$. 

We denote by $\mathcal{C}^{e}$ the set of all enriched mappings with respect to $\mathcal{C}$.
\end{definition}

\begin{remark}
From Definition \ref{bogat} it immediately follows that $\mathcal{C}\subseteq \mathcal{C}^{e}$.
\end{remark}

\begin{definition} [\cite{BPac21d}]\label{sat}
Let $X$ be a linear vector space and let $\mathcal{C}$ be a subset of the family of all self mappings of $X$. If $\mathcal{C}= \mathcal{C}^{e}$, we say that $\mathcal{C}$  is a {\em saturated class of mappings}, otherwise $\mathcal{C}$ is said to be {\em unsaturated}. 
\end{definition}

If we summarize and the results surveyed in the previous two sections of this paper, then we can see that the following classes of mappings:
\begin{itemize}
\item Banach contractions
\item Kannan mappings 
\item \' Ciri\' c-Reich-Rus contractions
\item Chatterjea mappings
\item almost contractions
\item $\varphi$-contractions
\item cyclic enriched $\varphi$-contractions
\item Pre\v si\' c contractions
\end{itemize}
are {\em unsaturated} in the setting of a  Banach space.

Also, from the results surveyed in Section \ref{s4} we infer that
\begin{itemize}
\item  the class of nonexpansive mappings is unsaturated in Hilbert spaces;
\item the class of nonexpansive mappings is unsaturated in Banach spaces
\end{itemize}

Let us denote by $\mathcal{E}$ the enriching operator by the average perturbation of a certain class of self mappings of $X$, i.e., if $\mathcal{C}$ is a class of mappings, then
$$
\mathcal{E}(\mathcal{C})=\mathcal{C}^{e}.
$$ 
It is easy to prove that $\mathcal{E}$ is idempotent, that is, $\mathcal{E}\circ \mathcal{E}=\mathcal{E}$, which means that any class of enriched mappings is {\em saturated}. 

This implies that
\begin{itemize}
\item the class of strictly pseudocontractive mappings is saturated in Hilbert spaces;
\item the class of enriched nonexpansive mappings  is saturated in Hilbert spaces;
\item the class of demicontractive mappings is  is saturated in Hilbert spaces.
\end{itemize}
  
  The last claim follows by Theorem  9 in Berinde and P\u acurar \cite{BPac21d}.

\section{Enriched contractions in quasi-Banach spaces} \label{s6}
\indent

The notion of enriched contraction in the setting of a quasi-Banach space has been introduced and studied in Berinde  \cite{Ber24}. We recall the following prerequisites.

\begin{definition} \label{def2-qB}
A quasi-norm on a real vector space $X$ is a  map $\|\cdot \|: X\rightarrow [0,\infty)$  satisfying the following conditions:
\smallskip

$(QN_0)$ $\|x\|=0$ if and only if $x=0$;

$(QN_1)$ $\|\lambda x\|=|\lambda|\cdot \|x\|$, for all $x\in X$ and $\lambda\in \mathbb{R}$.

$(QN_2)$ $\|x+y\|\leq C\left[\|x\|+\|y\|\right]$, for all $x,y\in X$, where $C\geq 1$ does not depend on $x,y$;
\end{definition}

The pair $(X,\|\cdot \|)$, where $\|\cdot \|$ is a quasi-norm on a real vector space $X$, is said to be  a quasi-normed space. If $(X,\|\cdot \|)$ is complete (with respect to the quasi norm), then  is called a {\it quasi-Banach space}.

\begin{definition}\label{def3-qB}
Let $(X,\|\cdot\|)$ be a linear quasi-normed space. A mapping $T:X\rightarrow X$ is said to be an {\it enriched contraction} if there exist $b\in[0,+\infty)$ and $\theta\in[0,b+1)$ such that
\begin{equation} \label{eq3-qB}
\|b(x-y)+Tx-Ty\|\leq \theta \|x-y\|,\forall x,y \in X.
\end{equation}
To indicate the constants involved in \eqref{eq3-qB} we shall also call  $T$ a  $(b,\theta$)-{\it enriched contraction}. 
\end{definition}

\begin{remark}
\indent

1) As any Banach space is a quasi-Banach space (with $C=1$), the enriched contractions $T$ in a Banach space introduced in Berinde and P\u acurar \cite{BPac20} are enriched contractions in the sense of Definition \ref{def3-qB}.  

2) It is worth mentioning that, like in the case of Banach spaces, any $(b,\theta$)-enriched contraction is continuous.

\end{remark}

The following result is an extension of Theorem \ref{th-B} from Banach spaces to quasi-Banach spaces.

\begin{theorem} [Berinde  \cite{Ber24}] \label{th1-qB}
Let $(X,\|\cdot\|)$ be a quasi-Banach space and $T:X\rightarrow X$ a $(b,\theta$)-{\it enriched contraction}. Then

$(i)$ $Fix\,(T)=\{p\}$;

$(ii)$ There exists $\lambda\in (0,1]$ such that the iterative method
$\{x_n\}^\infty_{n=0}$, given by
\begin{equation} \label{eq3a-qB}
x_{n+1}=(1-\lambda)x_n+\lambda T x_n,\,n\geq 0,
\end{equation}
converges to p, for any $x_0\in X$;

\end{theorem}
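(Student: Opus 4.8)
The plan is to reduce the statement to the ordinary Banach contraction principle applied to the averaged operator, exactly as in the proof of Theorem \ref{th-B}, and then to isolate precisely which step of that classical argument breaks down when the triangle inequality is replaced by the quasi-triangle inequality $(QN_2)$. First I would set $\lambda=\frac{1}{b+1}\in(0,1]$ and consider $T_\lambda=(1-\lambda)I+\lambda T$. A direct computation gives
\[
T_\lambda x-T_\lambda y=\frac{b}{b+1}(x-y)+\frac{1}{b+1}(Tx-Ty)=\frac{1}{b+1}\bigl(b(x-y)+Tx-Ty\bigr),
\]
so that, by $(QN_1)$ and the enriched contraction condition \eqref{eq3-qB},
\[
\|T_\lambda x-T_\lambda y\|=\frac{1}{b+1}\|b(x-y)+Tx-Ty\|\le\frac{\theta}{b+1}\,\|x-y\|=c\,\|x-y\|,
\]
with $c=\frac{\theta}{b+1}\in[0,1)$ because $\theta<b+1$. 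Thus $T_\lambda$ is a genuine (quasi-norm) contraction, the Krasnoselskij iteration \eqref{eq3a-qB} is exactly the Picard iteration $x_{n+1}=T_\lambda x_n$, and by the purely algebraic identity \eqref{b1} one has $Fix\,(T)=Fix\,(T_\lambda)$. Hence it suffices to prove that a contraction $T_\lambda$ on a quasi-Banach space has a unique fixed point to which its Picard iterates converge.

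Two of the three ingredients transfer with no difficulty. The contraction estimate immediately yields the geometric decay of consecutive iterates,
\[
\|x_{n+1}-x_n\|=\|T_\lambda x_n-T_\lambda x_{n-1}\|\le c^{n}\,\|x_1-x_0\|,\qquad n\ge0,
\]
using only $(QN_1)$ and the bound above, with no appeal to subadditivity. Uniqueness is equally direct: if $p$ and $q$ are fixed points then $\|p-q\|=\|T_\lambda p-T_\lambda q\|\le c\|p-q\|$ forces $p=q$ since $c<1$; and since a contraction is Lipschitz, $T_\lambda$ is continuous, so once the iterates converge to some $p$ the relation $x_{n+1}=T_\lambda x_n$ passes to the limit to give $T_\lambda p=p$.

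The hard part will be showing that $\{x_n\}$ is a Cauchy sequence. In a Banach space one simply sums the geometric series $\sum c^{n}\|x_1-x_0\|$ using the triangle inequality; here $(QN_2)$ only gives $\|u+v\|\le C(\|u\|+\|v\|)$ with $C\ge1$, and iterating it over $m$ summands injects uncontrolled powers of $C$, so the naive estimate for $\|x_{n+m}-x_n\|$ fails. I would circumvent this by invoking the Aoki--Rolewicz theorem: there exist $p\in(0,1]$ and an equivalent quasi-norm $|||\cdot|||$ that is a $p$-norm, i.e. $|||u+v|||^{p}\le|||u|||^{p}+|||v|||^{p}$. Then $d(x,y)=|||x-y|||^{p}$ is a genuine translation-invariant metric inducing the same topology, so $(X,d)$ is complete. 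Crucially, I only need the metric to sum the already-established geometric decay: by equivalence $|||x_{j+1}-x_j|||\le A\|x_{j+1}-x_j\|\le Ac^{j}\|x_1-x_0\|$, whence the honest subadditivity of $d$ gives
\[
d(x_{n+m},x_n)\le\sum_{j=n}^{n+m-1}|||x_{j+1}-x_j|||^{p}\le A^{p}\|x_1-x_0\|^{p}\sum_{j\ge n}c^{jp}=A^{p}\|x_1-x_0\|^{p}\,\frac{c^{np}}{1-c^{p}},
\]
which tends to $0$ as $n\to\infty$, since $c^{p}<1$. Thus $\{x_n\}$ is Cauchy in $d$, hence (by equivalence of the two quasi-norms) in $\|\cdot\|$, and completeness furnishes the limit $p$. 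Combining this with the continuity and uniqueness observations above proves $(i)$ and $(ii)$.

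An entirely self-contained alternative, avoiding Aoki--Rolewicz, would be a dyadic-block summation: grouping the tail $\sum_{j\ge n}(x_{j+1}-x_j)$ into blocks of $2^{k}$ consecutive terms, the quasi-triangle inequality contributes a factor $C^{k}$ per block while the geometric weight forces a factor $c^{2^{k}}$ that decays doubly exponentially, so $\sum_k(2C)^{k}c^{2^{k}}$ converges and the tail is bounded by a constant multiple of $c^{\,n-1}\|x_1-x_0\|\to0$. I expect the bookkeeping of the block indices in this variant to be the only genuinely delicate point, which is precisely why the passage to an equivalent $p$-norm is the cleaner route.
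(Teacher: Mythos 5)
Your reduction is exactly the standard one: with $\lambda=\frac{1}{b+1}$ the averaged map $T_\lambda$ satisfies $\|T_\lambda x-T_\lambda y\|\le c\|x-y\|$ with $c=\frac{\theta}{b+1}<1$ using only $(QN_1)$, and $Fix\,(T)=Fix\,(T_\lambda)$, so everything hinges on a Banach-principle analogue for quasi-Banach spaces; note that the survey itself states Theorem \ref{th1-qB} without proof (it is quoted from [Ber24]), so I am comparing you against the argument one would expect there rather than against a proof printed in this paper. You correctly isolate the one genuinely nontrivial point --- the Cauchy estimate, where summing the geometric series $\sum c^j\|x_1-x_0\|$ is blocked because iterating $(QN_2)$ over $m$ summands produces uncontrolled powers of $C$ --- and your resolution via the Aoki--Rolewicz renorming (pass to an equivalent $p$-subadditive quasi-norm, so that $d(x,y)=|||x-y|||^p$ is an honest translation-invariant metric in which the tail $\sum_{j\ge n}c^{jp}$ is summable) is complete and correct; the small remaining steps (uniqueness of limits in the quasi-norm topology when passing to the limit in $x_{n+1}=T_\lambda x_n$, and the equivalence of Cauchy sequences under equivalent quasi-norms) all go through. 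The expected alternative, which is what proofs in the quasi-normed and $b$-metric literature typically do, is an elementary direct estimate: either the dyadic-block summation you sketch (where the doubly exponential decay $c^{2^k}$ beats the per-block powers of $C$), or the known lemma that a sequence with $\|x_{n+1}-x_n\|\le c\,\|x_n-x_{n-1}\|$, $c<1$, is automatically Cauchy in such spaces. Your route buys a cleaner, bookkeeping-free argument at the price of importing a nontrivial renorming theorem; the elementary route is self-contained but, as you anticipate, delicate in its index management. Either way the proof is sound; the only cosmetic caution is that the constant in your dyadic sketch should be something like $(2C^2)^k$ rather than $(2C)^k$ once the within-block and between-block applications of $(QN_2)$ are both counted, which does not affect convergence.
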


\section{Other developments in the area of enriched contractive type mappings } \label{s7}

The technique of enriching contractive type mappings, applied by the current authors to several classes of mappings, attracted the interest of many other researchers. In the following we summarise to date some of these contributions.

\begin{enumerate}
\item Abbas et al. \cite{Abb21} introduced the notion of {\it enriched quasi-contraction}, as a generalization of the clasical \' Ciri\' c quasi-contraction, and also the class of  {\it enriched weak contraction mappings} and established  and studied the existence and iterative approximation of their fixed points. 

\item Abbas et al. \cite{Abb21a} introduced and studied the class of {\it enriched multivalued contraction mappings} and also considered the data dependence problem and Ulam-Hyers stability of the fixed point problems for enriched multivalued contraction mappings. They also give applications of the obtained results to the problem of the existence of a solution of differential inclusions and dynamic programming.

\item Abbas et al. \cite{Abb22} introduced the concept of {\it generalized enriched cyclic contraction mapping} and obtained existence of fixed points and established convergence results for Krasnoselskij iteration used for approximating fixed points of such mappings. As an application of their results, they also established the existence and uniqueness of an attractor for an iterated function system composed of generalized enriched cyclic contraction mappings.

\item Abbas et al. \cite{Abb23a} introduced the concept of {\it enriched contractive mappings of Suzuki type}. Such a mapping has a fixed point and characterizes the completeness of the underlying normed space.

\item Abbas et al. \cite{Abb22a} introduced the class of {\it enriched interpolative Kannan type operators} on Banach spaces. This class contains the classes of enriched Kannan operators, interpolative Kannan type contraction operators and some other classes of nonlinear operators. They prove a convergence theorem for the Krasnoselskii iteration method to approximate fixed point of the enriched interpolative Kannan type operatorsand, as an application of the main result, solved a variational inequality problem. The same authors propose in \cite{Abb23b} a new class of multi-valued enriched interpolative \' Ciri\' c-Reich-Rus type contraction operators, prove a fixed point result, study the data dependence and Ulam-Hyers stability for these operators and obtain a homotopy result as an application of their results.

\item Hacio\v glu and G\" ursoy \cite{Hac21} introduced multivalued G\' ornicki mappings and various other new types of multivalued enriched contractive mappings, like multivalued enriched Kannan mappings, multivalued enriched Chatterjea mappings, and multivalued enriched \' Ciri\' c-Reich-Rus mappings, and established existence results for the fixed points of these multivalued contractive type mappings by using the fixed point property of the average operator of the mappings.

\item Babu and Mounika \cite{Babu23} defined the classes of {\it enriched Jaggi contraction maps}, {\it enriched Dass and
Gupta contraction maps} and {\it almost $(k, a, b, \lambda)$-enriched CRR contraction maps} in Banach spaces and proved the existence and uniqueness of fixed points of these maps.

\item By using some semi-implicit relations, Mondal et al. \cite {Mon21} introduced  {\it enriched $\mathcal{A}$-contractions} and {\it enriched $\mathcal{A}'$-contractions} and studied the existence of fixed points, the well-posedness and limit shadowing property of the fixed point problem involving these contractions. The same authors obtained laterMaia type results for enriched contractions via implicit relations  \cite{Bera23} , thus extending the results of Berinde \cite{Ber22}.

\item Chandok \cite{Chan22} obtained convergence and existence results of best proximity points for cyclic enriched contraction maps in Takahashi convex metric spaces. 

\item Popescu \cite{Pop21} introduced a new class of Picard operators, called {\it G\' ornicki mappings}, which includes the class of enriched contractions \cite{BPac20}, enriched Kannan mappings \cite{BPac21b}, and enriched Chatterjea mappings \cite{BPac21}, and proved some fixed point theorems for these mappings. However, while the fixed points of enriched contractions can be approximated by means of Krasnoselskij iteration, there is no any approximation result in \cite{Pop21} for the case of G\' ornicki mappings. This rise the challenging problem of finding iterative schemes to approximate the unique fixed  point of a G\' ornicki mapping.

\item Debnath \cite{Deb22} introduced the notion of G\' ornicki-type pair of mappings, establish a criterion for existence and uniqueness of common fixed point for such a pair without assuming continuity of the underlying mappings and also establish a common fixed point result for a pair of enriched contractions.

\item Deshmukh et al. \cite{Desh23}, amongst many other related results for enriched non-expansive maps and enriched generalized non-expansive maps, also give stability results for two iterative  procedures in the class of enriched contractions.

\item Faraji and Radenovi\' c \cite{Far23} established fixed point results for enriched contractions and enriched Kannan contractions in partially ordered Banach spaces.

\item Based on the so-called degree of nondensifiability, Garc\' ia \cite{Gar22} introduced a generalization of the $(b,\theta)$-enriched contractions \cite{BPac20} and established a fixed point existence result for this new class of mappings. From their main result, and under some suitable conditions, they derived a result on the existence of fixed points for the sum of two mappings, one of them being compact.

\item Khan et al. \cite{Khan23} initiate the study of enriched mappings in modular function spaces, by introducing the concepts of {\it enriched $\rho$-contractions} and {\it enriched $\rho$-Kannan mappings} and establishing some results on the existence of fixed points of such mappings in this setting. 

\item By introducing the concept of convex structure in rectangular $G_b$-metric spaces, Li and Cui \cite{Li22}  studied the existence of  fixed points of enriched type contractions in such a space.

\item As a generalization of the main result in  \cite{BPac20},  Marchi\c s \cite{Mar21} obtained some common fixed point theorems under an enriched type contraction condition for two single-valued mappings satisfying a weak commutativity condition in Banach spaces and has shown that  the unique common fixed point of these mappings can be approximated using the Krasnoselskij iteration.

\item By using the idea of the orbital contraction condition given in \cite{Pet21} and considering the second iterate of the mapping in the enriched contraction condition, Nithiarayaphaks and Sintunavarat \cite{Nith23} introduced the class of {\it weak enriched contraction mappings} and approximated their fixed point by Kirk's iterative scheme.

\item  Panicker and Shukla \cite{Pani22}  obtained stability results of fixed point sets for a sequence of enriched contraction mappings in the setting of convex metric spaces, by considering two types of convergence of sequences of mappings, namely, $(\mathcal{G})$-convergence and $(\mathcal{H})$-convergence.

\item Panja et al. \cite{Panj22}  introduced a new non-linear semigroup of enriched Kannan type contractions and proved the existence of a common fixed point on a closed, convex, bounded subset of a real Banach space having uniform normal structure.

\item Among many other related results, Prithvi and Katiyar \cite{Prit23} studied fractals through generalized cyclic enriched \' Ciri\' c-Reich-Rus iterated function systems.

\item Rawat et al. \cite{Raw22} considered enriched ordered contractions in convex noncommutative Banach spaces, while Rawat, Bartwal and Dimri \cite{Raw23}  defined and studied interpolative enriched contractions of Kannan type, Hardy-Rogers type and Matkowski type in the setting of a convex metric space.

\item Ali and  Jubair \cite{Ali23} introduced and studied the so called enriched Berinde nonexpansive mappings, which are related to enriched almost contractions.

\item Anjali and Batra \cite{Anjali} introduced enriched \' Ciri\' c's type and enriched Hardy-Rogers contractions for which they established fixed point theorems in Banach spaces and convex metric spaces. They showed that \' Ciri\' c's type and Hardy-Rogers contractions are unsaturated classes of mappings and also considered Reich and Bianchini contractions, which were shown to be unsaturated classes of mappings, too.

\item Babu and Mounika \cite{Babu23} introduced enriched Jaggi contraction maps, enriched Dass and Gupta contraction maps and almost $(k, a, b, \lambda)$-enriched CRR contraction maps in Banach spaces and established results on the existence and uniqueness of fixed points of these maps. 

\item Zhou et al. \cite{Zhou}  introduced and studied  weak enriched $\mathcal{F}$-contractions, weak enriched $\mathcal{F}'$-contraction, and $k$-fold averaged mapping based on Kirk's iterative algorithm of order $k$ and proved the existence of a unique fixed point of the $k$-fold averaged mapping associated with weak enriched contractions considered.

\end{enumerate}

\section{Conclusions}

In the first part of this paper we presented a few relevant facts about the way in which the technique of enriching contractive mappings was (re-)discovered. 

In the main part of the paper we have exposed the main contributions in the area of enriched mappings established by the authors and their collaborators  by using this technique.

In the last part, we also surveyed some related developments which were authored by other researchers, by considering a list of references to date.

\section*{Acknowledgements}

The first author is grateful to Department of Mathematics and Computer
Science, North University Centre at Baia Mare,  Technical University of Cluj-Napoca (former North University of Baia Mare), for offering him an excellent research environment in the last 35 years.

\vskip 0.5 cm {\it $^{1}$ Department of Mathematics and Computer Science

North University Center at Baia Mare

Technical University of Cluj-Napoca 

Victoriei 76, 430122 Baia Mare ROMANIA

E-mail: vberinde@cunbm.utcluj.ro}

\vskip 0.5 cm {\it $^{2}$ Academy of Romanian Scientists  (www.aosr.ro)

E-mail: vasile.berinde@gmail.com}

\vskip 0.5 cm {\it $^{3}$ Department of Statistics, Analysis, Forecast and Mathematics

Faculty of Economics and Bussiness Administration 

Babe\c s-Bolyai University of Cluj-Napoca,  Cluj-Napoca ROMANIA

E-mail: madalina.pacurar@econ.ubbcluj.ro}

\end{document}